\newtheorem{theorem}{Theorem}[section]
\newtheorem{corollary}[theorem] {Corollary}
\newtheorem{definition}[theorem]{Definition}
\newtheorem{lemma} [theorem]{Lemma}
\newtheorem{proposition}[theorem]{Proposition}
\newtheorem{remark}[theorem]{Remark}
\title{\bf A Characterisation of Strong Integer Additive Set-Indexers of Graphs}
\author{{\bf N K Sudev \footnote{Department of Mathematics, Vidya Academy of Science \& Technology, Thalakkottukara, Thrissur - 680501, email: {\em sudevnk@gmail.com}}} and {\bf K A Germina\footnote{Department of Mathematics, School of Mathematical \& Physical Sciences, Central University of Kerala, Kasaragod, email:{\em srgerminaka@gmail.com}}}}
\date{}
\begin{document}
\maketitle

\begin{abstract}
An integer additive set-indexer is defined as an injective function $f:V(G)\rightarrow 2^{\mathbb{N}_0}$ such that the induced function $g_f:E(G) \rightarrow 2^{\mathbb{N}_0}$ defined by $g_f (uv) = f(u)+ f(v)$ is also injective, where $f(u)+f(v)$ is the sumset of $f(u)$ and $f(v)$. If $g_f(uv)=k~\forall~uv\in E(G)$, then $f$ is said to be a $k$-uniform integer additive set-indexers. An integer additive set-indexer $f$ is said to be a strong integer additive set-indexer if $|g_f(uv)|=|f(u)|.|f(v)|~\forall ~ uv\in E(G)$. We already have some  characteristics of the graphs which admit strong integer additive set-indexers. In this paper, we study the characteristics of certain graph classes, graph operations and graph products that admit strong integer additive set-indexers.
\end{abstract}
\textbf{Key words}: Set-indexers, integer additive set-indexers, strong integer additive set-indexers, Difference sets, nourishing number of a graph.\\
\textbf{AMS Subject Classification No.: 05C78}

\section{Introduction}

For all  terms and definitions, not defined specifically in this paper, we refer to \cite{FH}. Unless mentioned otherwise, all graphs considered here are simple, finite and have no isolated vertices.

Let $\mathbb{N}_0$ denote the set of all non-negative integers. For all $A, B \subseteq \mathbb{N}_0$, the sum of these sets is denoted by  $A+B$ and is defined by $A + B = \{a+b: a \in A, b \in B\}$. The set $A+B$ is called the {\em sumset} of the sets $A$ and $B$. Also, we have $2A=A+A$.

If either $A$ or $B$ is countably infinite, then their sumset is also countably infinite. Hence, the sets we consider here are all finite sets of non-negative integers. The cardinality of a set $A$ is denoted by $|A|$. 

\begin{definition}\label{D2}{\rm
\cite{GA} An {\em integer additive set-indexer} (IASI, in short) is defined as an injective function $f:V(G)\rightarrow 2^{\mathbb{N}_0}$ such that the induced function $g_f:E(G) \rightarrow 2^{\mathbb{N}_0}$ defined by $g_f (uv) = f(u)+ f(v)$ is also injective}.
\end{definition}

\begin{definition}{\rm
\cite{GS2} If a graph $G$ has a set-indexer $f$ such that $|g_f(uv)|=|f(u)+f(v)|=|f(u)|.|f(v)|$ for all vertices $u$ and $v$ of $G$, then $f$ is said to be a {\em strong IASI} of $G$.} 
\end{definition}

\begin{definition}\label{D6}{\rm
\cite{GS2} If $G$ is a graph which admits a $k$-uniform IASI and $V(G)$ is $l$-uniformly set-indexed, then $G$ is said to have a {\em $(k,l)$-completely uniform IASI} or simply a {\em completely uniform IASI}.}
\end{definition}

We use the notation $A<B$ in the sense that $A\cap B=\emptyset$. We notice that the relation $<$ is symmetric, but need not be reflexive and transitive. By the sequence $A_1<A_2<A_3<\ldots <A_n$, we mean that the given sets are pairwise disjoint.

\begin{lemma}\label{L-RDS}
\cite{GS2} Let $A$, $B$ be two non-empty subsets of $\mathbb{N}_0$. Then, $|A+B|=|A|.|B|$ if and only if their difference sets, denoted by $D_A$ and $D_B$ respectively, follow the relation $D_A<D_B$.
\end{lemma}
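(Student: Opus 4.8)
The plan is to reduce the cardinality identity to an injectivity statement about the addition map and then translate that into a disjointness statement about differences; throughout I adopt the convention (implicit in the cited difference-set notation) that $D_A = \{\,|a_i-a_j| : a_i,a_j\in A,\ a_i\neq a_j\,\}$, the set of \emph{positive} differences of distinct elements, and likewise for $D_B$. I would begin by recording the trivial bound $|A+B|\le |A|\cdot|B|$, which holds because $A+B$ is exactly the image of the map $\sigma:A\times B\to\mathbb{N}_0$, $\sigma(a,b)=a+b$, whose domain has $|A|\cdot|B|$ elements. Hence $|A+B|=|A|\cdot|B|$ if and only if $\sigma$ is injective, i.e. no two distinct ordered pairs of $A\times B$ share a common sum.

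Next I would analyse precisely when $\sigma$ collides. Suppose $\sigma(a_1,b_1)=\sigma(a_2,b_2)$ with $(a_1,b_1)\neq(a_2,b_2)$. Rearranging $a_1+b_1=a_2+b_2$ gives $a_1-a_2=b_2-b_1$. If either side vanished, the other would vanish too, forcing $(a_1,b_1)=(a_2,b_2)$; so both sides equal a common \emph{nonzero} value. Passing to absolute values, $|a_1-a_2|=|b_1-b_2|$ is a positive integer lying simultaneously in $D_A$ and in $D_B$, so $D_A\cap D_B\neq\emptyset$, that is, the relation $D_A<D_B$ fails. This establishes the contrapositive of one implication.

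For the converse I would start from a common element $d\in D_A\cap D_B$ with $d>0$ and realise it as $d=a_1-a_2=b_2-b_1$ after choosing orientations appropriately: since $D_A$ and $D_B$ record unsigned magnitudes, any witnessing pairs can be reoriented so that $a_1-a_2=+d$ and $b_2-b_1=+d$. This yields $a_1+b_1=a_2+b_2$ with $a_1\neq a_2$, a genuine collision of $\sigma$, whence $|A+B|<|A|\cdot|B|$. Combining the two directions gives the biconditional, and rewriting $D_A\cap D_B=\emptyset$ in the notation $D_A<D_B$ finishes the argument.

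The step I expect to demand the most care is the sign bookkeeping in the converse: a common element of the difference sets is a priori only an unsigned magnitude, and one must check it can always be oriented so the two differences point in opposite directions, producing $a_1+b_1=a_2+b_2$ rather than a non-collision. This is exactly where the convention for $D_A$ matters, and I would flag it at the outset: were $0$ admitted into the difference sets, then $0\in D_A\cap D_B$ for all non-empty $A,B$, so $D_A<D_B$ could never hold and the statement would be false. Excluding $0$ (equivalently, restricting to distinct pairs) is therefore the essential hypothesis that makes both directions go through.
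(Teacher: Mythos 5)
Your proof is correct and complete: reducing $|A+B|=|A|\cdot|B|$ to injectivity of the sum map $\sigma(a,b)=a+b$ and identifying a collision with a common positive element of $D_A$ and $D_B$ is exactly the right argument, and your observation that $0$ must be excluded from the difference sets (else $D_A<D_B$ could never hold) is a necessary clarification of the paper's loose phrase ``set of all differences between any two elements.'' Note that the paper itself offers no proof of this lemma --- it is quoted from the cited manuscript [GS2] --- so there is no in-text argument to compare against; your proof stands on its own as the natural one.
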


\begin{theorem}\label{TSK}
\cite{GS2} Let each vertex $v_i$ of the  complete graph $K_n$ be labeled by the set $A_i\in 2^{\mathbb{N}_0}$. Then $K_n$ admits a strong IASI if and only if there exists a finite sequence of sets $D_1<D_2<D_3<\cdots,<D_n$ where each $D_i$ is the set of all differences between any two elements of the set $A_i$.
\end{theorem}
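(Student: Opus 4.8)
The plan is to reduce the statement to a single, simultaneous application of Lemma~\ref{L-RDS} across all edges of $K_n$. Write $f(v_i)=A_i$ and let $D_i$ denote the difference set of $A_i$. The point that makes the complete graph special is that every unordered pair $\{v_i,v_j\}$ with $i\neq j$ is an edge, so $f$ being a strong IASI is equivalent to the cardinality identity $|A_i+A_j|=|A_i|\cdot|A_j|$ holding for \emph{every} pair $i\neq j$. Lemma~\ref{L-RDS} converts each such identity into the relation $D_i<D_j$, and the whole theorem then amounts to quantifying this equivalence over all $\binom{n}{2}$ pairs.

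For the forward implication I would assume $K_n$ carries a strong IASI $f$ with $f(v_i)=A_i$. Fixing an arbitrary $i\neq j$, the edge $v_iv_j$ forces $|A_i+A_j|=|A_i|\cdot|A_j|$, so Lemma~\ref{L-RDS} gives $D_i<D_j$, that is $D_i\cap D_j=\emptyset$. Since the pair was arbitrary, the sets $D_1,\ldots,D_n$ are pairwise disjoint, which is exactly the claim that the sequence $D_1<D_2<\cdots<D_n$ exists.

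For the converse I would start from sets $A_1,\ldots,A_n$ whose difference sets satisfy $D_1<D_2<\cdots<D_n$ and put $f(v_i)=A_i$. For each edge $v_iv_j$ the hypothesis supplies $D_i\cap D_j=\emptyset$, so the reverse implication of Lemma~\ref{L-RDS} returns $|A_i+A_j|=|A_i|\cdot|A_j|$; hence the defining cardinality identity of a strong IASI holds on every edge. It then remains only to confirm that $f$ is a bona fide set-indexer, i.e. that $f$ and the induced $g_f$ are injective, since a strong IASI is by definition first of all an IASI.

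I expect this injectivity check to be the main obstacle, or at least the only step beyond a mechanical use of Lemma~\ref{L-RDS}. Pairwise disjointness of the difference sets already forces $A_i\neq A_j$ as soon as one of the two is non-trivial; the sole degenerate case is a pair of coinciding singletons, whose difference sets are both empty and therefore vacuously disjoint, and I would exclude this explicitly by invoking the injectivity required of an IASI rather than letting it slip past. Confirming injectivity of $g_f$ is the more delicate point, since disjointness of difference sets pins down the \emph{size} of each edge-sum but not, on its face, the sets themselves; I would treat it as the crux of the write-up, while noting that under the standing convention that all labelings considered are IASIs the theorem collapses to the edge-by-edge reading of Lemma~\ref{L-RDS} with no further work.
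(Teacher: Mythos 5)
The paper does not actually prove Theorem~\ref{TSK}: it is imported verbatim from \cite{GS2} and used here as a black box, so there is no in-paper argument to set yours against. That said, your reduction is the standard and correct one. In $K_n$ every unordered pair $\{v_i,v_j\}$ is an edge, so the strongness condition $|A_i+A_j|=|A_i|\cdot|A_j|$ must hold for all $i\neq j$, and Lemma~\ref{L-RDS} translates each instance into $D_i\cap D_j=\emptyset$; quantifying over all $\binom{n}{2}$ pairs yields exactly the pairwise disjointness that the notation $D_1<D_2<\cdots<D_n$ encodes. The one substantive issue is the injectivity check you flag in the converse, and you are right that it is not a formality in the abstract: with the paper's convention that a singleton's difference set is empty (forced by Lemma~\ref{L-RDS}, since $|A+B|=|A|\cdot|B|$ holds automatically for singletons), the labels $A_1=\{0\}$, $A_2=\{3\}$, $A_3=\{1\}$, $A_4=\{2\}$ have vacuously disjoint difference sets yet give $g_f(v_1v_2)=g_f(v_3v_4)=\{3\}$, so disjointness alone does not make $g_f$ injective. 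The resolution is the one you tentatively offer at the end: in this paper a strong IASI is by definition an IASI satisfying an additional cardinality identity, so the equivalence in Theorem~\ref{TSK} is quantified over labelings already assumed to be set-indexers, and under that reading your edge-by-edge application of Lemma~\ref{L-RDS} is the whole proof. You should commit to that reading explicitly (or impose a mild extra hypothesis ruling out coincident sumsets) rather than leave the $g_f$-injectivity step open as ``the crux,'' since as written your proposal identifies the gap without closing it.
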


\begin{theorem}\label{TSS}
\cite{GS2} If a graph $G$ admits a strong IASI then its subgraphs also admit strong IASI. 
\end{theorem}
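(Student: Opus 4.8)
The plan is to prove Theorem~\ref{TSS}: if a graph $G$ admits a strong IASI $f$, then every subgraph $H$ of $G$ also admits a strong IASI. The natural approach is to simply restrict the given labelling $f$ to the subgraph and verify that the strong condition is inherited. Since a strong IASI is, at its core, a vertex labelling $f:V(G)\to 2^{\mathbb{N}_0}$ whose induced edge condition $|g_f(uv)|=|f(u)|\cdot|f(v)|$ holds on \emph{every} edge, and since this is a condition local to each edge, I expect the restriction to work immediately.

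First I would let $H$ be an arbitrary subgraph of $G$, so that $V(H)\subseteq V(G)$ and $E(H)\subseteq E(G)$, and define $f' = f|_{V(H)}$, the restriction of $f$ to the vertex set of $H$. I would then check the defining properties in order. For injectivity of $f'$: since $f$ is injective on $V(G)$ and $V(H)\subseteq V(G)$, the restriction $f'$ is injective, so $f'$ is a valid set-valued vertex labelling. Next I would verify that $f'$ induces an IASI, i.e. that the induced edge function $g_{f'}(uv)=f'(u)+f'(v)=f(u)+f(v)$ is injective on $E(H)$; this again follows because $g_f$ is injective on $E(G)\supseteq E(H)$, so its restriction is injective.

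Finally, for the strong property: for every edge $uv\in E(H)$, we have $uv\in E(G)$ as well, and since $f$ is a strong IASI of $G$ the equality $|g_f(uv)|=|f(u)|\cdot|f(v)|$ holds. Because $f'$ agrees with $f$ on $V(H)$, this gives $|g_{f'}(uv)|=|f'(u)|\cdot|f'(v)|$ for all $uv\in E(H)$, which is exactly the strong IASI condition for $H$. Hence $f'$ is a strong IASI of $H$, completing the argument. Alternatively, one could phrase this through Lemma~\ref{L-RDS}: the strong condition on an edge $uv$ is equivalent to $D_{f(u)}<D_{f(v)}$ (disjointness of difference sets), and this pairwise condition is trivially preserved when passing to a subset of the edges.

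I do not anticipate any genuine obstacle here, as the statement is essentially a hereditary-closure property and the entire content is that the strong IASI condition is edge-local and therefore monotone under taking subgraphs. The only point requiring minor care is bookkeeping: one must confirm that restricting $f$ does not disturb injectivity of either $f$ or the induced $g_f$, but both follow at once from the fact that a restriction of an injective function is injective. Thus the proof is a short verification rather than a construction.
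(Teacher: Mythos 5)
Your restriction argument is correct and complete: the strong IASI condition is edge-local, and injectivity of both $f$ and $g_f$ survives restriction to $V(H)$ and $E(H)$. The paper itself states Theorem~\ref{TSS} as a cited result from \cite{GS2} without reproducing a proof, and your argument is the standard one that any proof of this hereditary property would use.
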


\begin{corollary}\label{TSC}
\cite{GS2} A connected graph $G$ (on $n$ vertices) admits a strong IASI if and only if each vertex $v_i$ of $G$ is labeled by a set $A_i$ in $2^{\mathbb{N}_0}$ and there exists a finite sequence of sets $D_1<D_2<D_3< \cdots <D_m$, where $m\le n$ is a positive integer and each $D_i$ is the set of all differences between any two elements of the set $A_i$. 
\end{corollary}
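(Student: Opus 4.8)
The plan is to prove both implications by reducing the global strong-IASI requirement to a collection of purely local (edge-wise) conditions via Lemma \ref{L-RDS}, and then packaging these local conditions into a single pairwise-disjoint family of difference sets; throughout, the statement should be read as the natural extension of Theorem \ref{TSK} from the complete graph $K_n$ to an arbitrary connected graph, with Theorem \ref{TSS} supplying the passage between the two.

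For the sufficiency ($\Leftarrow$), I would start from a labeling $v_i\mapsto A_i$ whose distinct difference sets form the disjoint sequence $D_1<D_2<\cdots<D_m$, arranged so that the two ends of every edge receive different members of this family. Then for each edge $v_iv_j$ the sets $D_{A_i}$ and $D_{A_j}$ are disjoint, so Lemma \ref{L-RDS} gives $|g_f(v_iv_j)|=|A_i|\cdot|A_j|$, which is exactly the strong-IASI cardinality condition. It then remains to secure that $f$ and the induced $g_f$ are injective; here I would use the freedom to translate each $A_i$ (a translate of a set has the same difference set) so that all vertex labels are distinct and all edge sumsets are distinct, which is possible because $\mathbb{N}_0$ is infinite.

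For the necessity ($\Rightarrow$), suppose $G$ admits a strong IASI $f$ with $f(v_i)=A_i$. For every edge $v_iv_j$ the defining equality $|g_f(v_iv_j)|=|A_i|\cdot|A_j|$ together with Lemma \ref{L-RDS} forces $D_{A_i}<D_{A_j}$, i.e. the difference sets of adjacent vertices are disjoint. Collecting the distinct difference sets occurring among $A_1,\dots,A_n$ produces at most $n$ of them, so $m\le n$; this is precisely the point at which a connected graph improves on the complete-graph count of Theorem \ref{TSK}, where all $\binom{n}{2}$ pairs are adjacent and therefore force exactly $n$ pairwise disjoint sets, whereas here non-adjacent vertices may share a difference set.

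The step I expect to be the main obstacle is upgrading this edge-wise disjointness to a genuinely pairwise-disjoint sequence $D_1<D_2<\cdots<D_m$. A strong IASI only guarantees that adjacent vertices have disjoint difference sets, while two non-adjacent vertices may carry overlapping or incomparable difference sets, so the distinct difference sets of an arbitrary strong IASI need not be pairwise disjoint. To close this gap I would pass to a canonical relabeling: fix a pairwise-disjoint family $D_1<D_2<\cdots<D_m$ with $m\le n$ and distribute its members over the vertices so that adjacent vertices always get distinct members (a proper-colouring type assignment, which is where the bound $m\le n$ enters), realising each prescribed difference set by a concrete subset of $\mathbb{N}_0$ via translation. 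Theorem \ref{TSS} then certifies that such a labeling restricts correctly to every subgraph of $G$, and the residual task is the careful bookkeeping required to keep both $f$ and $g_f$ injective once several vertices are forced to reuse the same difference set.
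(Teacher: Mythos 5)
First, a point of order: the paper does not prove this corollary at all --- it is quoted verbatim from the companion paper \cite{GS2} as a preliminary. So the comparison can only be against the proof that the paper's own machinery (Lemma \ref{L-RDS}, Theorem \ref{TSK}, Theorem \ref{TSS}) is clearly set up to deliver. Measured against that, your reduction of the strong-IASI condition to edge-wise disjointness of difference sets via Lemma \ref{L-RDS} is exactly right, and your sufficiency direction (disjoint difference sets on the two ends of every edge, plus translations to restore injectivity of $f$ and $g_f$) is sound, granting that you have silently strengthened the stated hypothesis by adding the clause ``arranged so that the two ends of every edge receive different members of the family,'' which the corollary omits but clearly intends.

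The genuine gap is in your necessity direction. You correctly observe that a strong IASI only forces \emph{adjacent} vertices to have disjoint difference sets, so the distinct difference sets of the given labeling need not form a pairwise-disjoint chain --- but your repair (discard the given labeling and build a fresh one by properly colouring $G$ and assigning one difference set per colour class) proves the wrong thing. Under that reading the right-hand side of the equivalence is satisfiable for \emph{every} connected graph, so the corollary becomes vacuous; moreover the chain you produce is no longer made of the difference sets of the strong IASI whose existence you assumed, which is precisely what the paper needs later when it defines the nourishing number as the length of a maximal chain of difference sets \emph{of a given labeling}. The missing idea is that the chain lives on a clique: take a maximal clique $H$ of $G$, note by Theorem \ref{TSS} that the restriction of the given strong IASI to $H$ is a strong IASI of a complete graph, and apply Theorem \ref{TSK} to get a chain $D_1<D_2<\cdots<D_m$ with $m=|V(H)|\le n$ consisting of the actual difference sets of the given labels. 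This one-line argument makes the relabeling, the proper colouring, and your appeal to Theorem \ref{TSS} (which certifies inheritance of strong IASIs by subgraphs, not correctness of a newly constructed labeling) all unnecessary, and it is the route the paper's subsequent results on $\varkappa(G)=\omega(G)$ presuppose.
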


\section{New Results on Strong IASI Graphs}
\subsection{Strong IASI of Certain Graph Classes}
Based on the results in the above section, we introduce the following notions. 

The set $D_i$ of all differences between two elements of the set $A_i$ is called the {\em difference set} of $A_i$ and the relation $<$ is called the {\em difference relation} on $G$. We call the sequence of difference sets, mentioned in Corollary \ref{TSC} a {\em chain} of  difference sets. The {\em length} of a chain is the number of difference sets in that chain. 

For any strong IASI graph with $m$ vertices and $n$ edges, there are $m$ difference sets, one each for each vertex and $n$ relations $<$, one each corresponding to each edge of $G$. Since $G$ is a strong IASI graph, if two vertices are adjacent in $G$, then there exists the difference relation between the difference sets of the set-labels of those vertices. These relations forms one or more chain of difference sets. The length of a chain of difference sets is noteworthy and hence we make the following notion.

\begin{definition}{\rm
The {\em nourishing number} of a set-labeled graph is the minimum length of the maximal chain of difference sets in $G$. The nourishing number of a graph $G$ is denoted by $\varkappa(G)$.}
\end{definition}

The study about nourishing number of different graphs and graph classes arises much interest. In this section, we discuss about the nourishing number of various graphs.

By Theorem \ref{TSK}, we have the following result.

\begin{theorem}
The nourishing number of a complete graph is $n$. That is, $\varkappa(K_n)=n$.
\end{theorem}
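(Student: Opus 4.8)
The plan is to prove that $\varkappa(K_n) = n$ by directly applying Theorem~\ref{TSK}. The key observation is that the nourishing number counts the minimum length of a maximal chain of difference sets, so I must show two things: first, that $K_n$ admits a strong IASI (so a chain exists), and second, that any such chain necessarily has length exactly $n$.

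First I would invoke Theorem~\ref{TSK} to guarantee that $K_n$ admits a strong IASI, which gives a sequence $D_1 < D_2 < \cdots < D_n$ where each $D_i$ is the difference set of the label $A_i$ of vertex $v_i$. The heart of the argument is that in $K_n$ every pair of distinct vertices is adjacent. By the discussion preceding the definition of the nourishing number, whenever two vertices $v_i, v_j$ are adjacent the strong-IASI condition forces the difference relation $D_i < D_j$ between their difference sets. Since $K_n$ is complete, this means $D_i < D_j$ holds for \emph{every} pair $i \neq j$; that is, the difference sets are pairwise disjoint, and all $n$ of them belong to a single chain.

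Next I would argue that this single chain is maximal and that no shorter maximal chain is possible. Because the difference relation holds between every pair of distinct difference sets, the sets $D_1, D_2, \ldots, D_n$ form one chain containing all $n$ difference sets, and no set can be omitted while still accounting for all the adjacencies: a vertex omitted from the chain would have no difference relation recorded for its incident edges, contradicting completeness. Hence the unique maximal chain has length $n$, and since the nourishing number is the minimum length of a maximal chain, we conclude $\varkappa(K_n) = n$.

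The step I expect to require the most care is making precise the bookkeeping between the combinatorial chain of difference sets and the edges of $K_n$, namely that completeness forces \emph{all} $n$ difference sets into one and the same chain rather than splitting them across several shorter chains. Once it is clear that pairwise disjointness of the $D_i$ (equivalently, the relations $D_i < D_j$ for all $i \neq j$ guaranteed by Theorem~\ref{TSK}) collapses everything into a single maximal chain on all $n$ vertices, the equality $\varkappa(K_n) = n$ follows immediately.
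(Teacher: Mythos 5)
Your argument is correct and follows the same route as the paper, which states this result as an immediate consequence of Theorem~\ref{TSK} without further proof: completeness forces all $n$ difference sets to be pairwise disjoint, so they form a single chain of length $n$. Your added bookkeeping about why no shorter maximal chain can occur is a reasonable elaboration of what the paper leaves implicit.
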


We proceed with the following theorem on the nourishing number of bipartite graphs.

\begin{theorem}
The nourishing number of a bipartite graph is $2$.
\end{theorem}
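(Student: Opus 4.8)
The plan is to establish the two inequalities $\varkappa(G)\ge 2$ and $\varkappa(G)\le 2$ for a bipartite graph $G$ with parts $X$ and $Y$. The whole argument runs through the difference-set description of strong IASI graphs: by Lemma~\ref{L-RDS} and Corollary~\ref{TSC}, a set-labelling $f$ is a strong IASI exactly when the difference sets of the labels of adjacent vertices are disjoint, and $\varkappa(G)$ is the minimum, taken over all such labellings, of the length of a longest chain of difference sets, a chain being a family of pairwise disjoint difference sets linked through the relation $<$.

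For the lower bound I would use that $G$, having no isolated vertices, contains at least one edge $uv$ with $u\in X$ and $v\in Y$. In any strong IASI of $G$, Lemma~\ref{L-RDS} forces $D_{f(u)}<D_{f(v)}$, so these two difference sets are disjoint and already constitute a chain of length $2$; hence every strong IASI of $G$ has a maximal chain of length at least $2$, and therefore $\varkappa(G)\ge 2$. (Equivalently, $K_2\subseteq G$ and the nourishing number does not increase on passing to a subgraph, so $\varkappa(G)\ge\varkappa(K_2)=2$ by the previous theorem.)

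For the upper bound the idea is to produce one strong IASI whose difference sets admit a chain of length exactly $2$. I would fix two disjoint non-empty integer sets $D_1<D_2$, label every vertex of $X$ by a set with difference set $D_1$ and every vertex of $Y$ by a set with difference set $D_2$; concretely one can take two-element labels $f(x_i)=\{a_i,a_i+d_1\}$ and $f(y_j)=\{b_j,b_j+d_2\}$ with $d_1\ne d_2$. Since $G$ is bipartite, every edge joins $X$ to $Y$, so each edge realises precisely the relation $D_1<D_2$, and Lemma~\ref{L-RDS} then gives $|g_f(uv)|=|f(u)|\,|f(v)|$ on every edge. Only the two non-empty difference sets $D_1,D_2$ occur, so any three vertices contain, by the pigeonhole principle, two carrying a common difference set, and a non-empty set is never disjoint from itself; hence no chain has length $3$ and the longest chain has length $2$, giving $\varkappa(G)\le 2$. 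Together with the lower bound this yields $\varkappa(G)=2$.

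The one point requiring care is the injectivity bookkeeping needed to make the construction an IASI in the first place: the base elements $a_i,b_j$ must be chosen so that all vertex-labels are distinct and all edge-sums $f(x_i)+f(y_j)$ are distinct. This is routine rather than conceptual — letting the $a_i$ and $b_j$ grow along a sufficiently sparse sequence separates every sumset — and it leaves the difference sets, on which the chain length alone depends, untouched. The essential content is simply that bipartiteness lets a single difference set per part service all edges, while the mere existence of an edge rules out a chain of length $1$.
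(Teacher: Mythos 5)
Your proof is correct and follows essentially the same route as the paper: exploit bipartiteness to service every edge with a single pair of disjoint difference sets, one per part, so that the difference relation holds across each edge while no chain of length three can form. You are in fact more complete than the paper's own argument, which omits both the explicit lower bound $\varkappa(G)\ge 2$ and the concrete two-element construction (and the injectivity bookkeeping) that you supply.
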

\begin{proof}
Let $G$ be a bipartite graph with bipartition $(X,Y)$ which admits a strong IASI. Let $X=\{u_1,u_2,u_3,\ldots, u_m\}$ and $Y=\{v_1,v_2,v_3,\ldots, v_n\}$. Define $f:V(G)\to 2^{\mathbb{N}_0}$ such that for two adjacent vertices $u_i\in X$ and $v_j\in Y$, the difference sets of $f(u_i)$ and $f(v_j)$, denoted by $D_{u_i}$and $D_{v_j}$ respectively, hold the relation $D_{u_i}<D_{v_j}$. Hence, corresponding to each edge in $G$, there exists a difference relation between the set-labels of its end vertices. Hence, $f$ is a strong IASI and the minimum length of the maximal chain in $G$ is $2$. That is, $\varkappa(G)=2$.
\end{proof}

\begin{proposition}\label{P-NN1}
The nourishing number of a triangle-free graph is $2$.
\end{proposition}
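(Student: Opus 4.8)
The plan is to prove $\varkappa(G)=2$ by showing that the nourishing number of any graph is governed by its clique number, which is $2$ for a triangle-free graph with at least one edge (isolated vertices being excluded). The lower bound is forced: for any edge $uv$ of a strong IASI graph, Lemma \ref{L-RDS} gives $D_u<D_v$, so $\{D_u,D_v\}$ is already a chain of length $2$ and hence $\varkappa(G)\ge 2$ for every admissible labelling. All the work is therefore in the upper bound, i.e.\ in exhibiting a strong IASI whose maximal chain has length exactly $2$.

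For the upper bound I would construct a labelling in which two difference sets are disjoint \emph{precisely} when the corresponding vertices are adjacent. The point to emphasise is that, unlike the bipartite case, triangle-free graphs need not be $2$-colourable (for instance $C_5$), so one cannot hope to use only two difference sets; instead I would allow many difference sets but force $D_u\cap D_w\neq\emptyset$ for every non-adjacent pair $\{u,w\}$ while keeping $D_u\cap D_v=\emptyset$ along every edge $uv$. With this pattern in place, any chain $D_{v_1}<D_{v_2}<\cdots<D_{v_k}$ consists of pairwise disjoint difference sets, and since disjointness now occurs only along edges the vertices $v_1,\ldots,v_k$ must be mutually adjacent, i.e.\ they form a clique. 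As $G$ is triangle-free its largest clique is an edge, so $k\le 2$; combined with the lower bound this yields $\varkappa(G)=2$.

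That such a labelling is a strong IASI follows from Corollary \ref{TSC}: the defining requirement is exactly that the difference sets of adjacent vertices be disjoint, which is how the pattern above is set up, so edgewise disjointness certifies admissibility. The main obstacle is realizability. A difference set is not an arbitrary set but must arise as $D_A=\{a-a':a,a'\in A\}$ for a finite $A\subseteq\mathbb{N}_0$, and the label sets must be distinct so that $f$ stays injective. The substantive step is thus to realise the prescribed intersection pattern with genuine integer labels: I would place the label sets in widely separated numerical ranges and encode, for each non-edge $\{u,w\}$, a shared difference that enters both $D_u$ and $D_w$, then verify that these shared differences never accidentally coincide across an edge. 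The combinatorial reduction ``triangle-free $\Rightarrow$ chains are single edges $\Rightarrow \varkappa(G)=2$'' is immediate once this integer construction is secured, so the care lies entirely in the construction, not in the counting.
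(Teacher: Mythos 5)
Your proof reaches the right conclusion by a genuinely different route. The paper disposes of the upper bound in one line: chains of difference sets are built only from the relations attached to edges, so (the relation $<$ not being transitive) a chain of length three or more would require a triangle, which $G$ lacks. You instead read ``chain'' literally as any sequence of pairwise disjoint difference sets, which obliges you to \emph{construct} a labelling in which $D_u\cap D_v=\emptyset$ exactly when $uv\in E(G)$; then every chain sits on a clique, triangle-freeness caps its length at $2$, and the lower bound from a single edge (via Lemma~\ref{L-RDS}) finishes the argument. Your route is more robust --- it does not rely on the implicit convention that disjointness only ``counts'' along edges, without which the paper's appeal to non-transitivity does not by itself forbid three pairwise disjoint difference sets --- but it shifts the entire burden onto the realizability of the prescribed intersection pattern, which you rightly flag as the crux yet leave as a sketch. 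It can be completed along the lines you indicate: index the non-edges by $1,\ldots,N$, pick a base $B$ exceeding any attainable subset sum, and let $A_v$ consist of the partial sums of $\{B^{e}: e\ \text{a non-edge at}\ v\}$, translated by widely separated offsets so that $f$ and $g_f$ stay injective; uniqueness of base-$B$ representations then gives $B^{e}\in D_u\cap D_w$ precisely when $e=\{u,w\}$ is a non-edge, and $D_u\cap D_w=\emptyset$ whenever $uw$ is an edge. With that construction written out your argument is complete, and arguably tighter than the one in the paper.
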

\begin{proof}
Since $G$ is a triangle-free graph, $G$ can not be a complete graph. Since the relation $<$ is not transitive, we can not find a chain consisting of three or more difference sets corresponding to the set-labels of vertices of $G$. Hence $\varkappa(G)=2$.
\end{proof}

A clique of graph $G$ is a complete subgraph of $G$. We recall that the clique number $\omega(G)$ of a graph is the number of vertices in a maximal clique in $G$.

\begin{proposition}\label{P-NN2}
The nourishing number of a graph $G$ is the clique number $\omega(G)$ of $G$.
\end{proposition}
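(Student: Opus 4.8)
The plan is to establish the two inequalities $\varkappa(G)\ge\omega(G)$ and $\varkappa(G)\le\omega(G)$ by identifying the chains of difference sets with the cliques of $G$. The guiding observation is Lemma \ref{L-RDS}: in a strong IASI, two difference sets satisfy the difference relation $D_u<D_v$ precisely when $|f(u)+f(v)|=|f(u)|\,|f(v)|$, and this is exactly the condition forced on every pair of adjacent vertices. Thus the difference relations that actually occur are those indexed by the edges of $G$, and a chain $D_{v_1}<D_{v_2}<\cdots<D_{v_k}$ (meaning the difference sets are pairwise disjoint) can be built from these edge-relations only when all $\binom{k}{2}$ pairs $v_iv_j$ are edges. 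Here the remark that $<$ is symmetric but not transitive (already exploited in Proposition \ref{P-NN1}) is essential: one cannot splice $D_1<D_2$ and $D_2<D_3$ into a genuine chain $D_1<D_2<D_3$ unless $D_1<D_3$ holds independently, i.e.\ unless $v_1v_3$ is also an edge. Consequently the vertex set underlying any chain induces a complete subgraph of $G$.

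For the lower bound, let $Q$ be a maximum clique of $G$, so $|Q|=\omega(G)$. By Theorem \ref{TSS} the restriction of any strong IASI of $G$ to the complete subgraph on $Q$ is again a strong IASI, and Theorem \ref{TSK} then forces the $\omega(G)$ difference sets of the vertices of $Q$ to form a chain $D_1<D_2<\cdots<D_{\omega(G)}$. Hence every strong IASI of $G$ contains a chain of length at least $\omega(G)$, so under every labeling the maximal chain has length at least $\omega(G)$, giving $\varkappa(G)\ge\omega(G)$.

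For the upper bound I would argue that no chain can be longer than $\omega(G)$ once the difference sets are chosen to realise the disjointness pattern dictated by adjacency, so that difference sets are disjoint exactly along edges. By the first paragraph the vertices of any chain form a clique, and a clique has at most $\omega(G)$ vertices by the definition of the clique number; therefore the longest chain has length at most $\omega(G)$, and this bound is attained by the labeling just described. Taking the minimum over all strong IASIs of $G$ yields $\varkappa(G)\le\omega(G)$, and combined with the lower bound this gives $\varkappa(G)=\omega(G)$.

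The step I expect to be the main obstacle is the upper bound, specifically ruling out ``accidental'' chains: one must guarantee the existence of a strong IASI in which non-adjacent vertices receive overlapping difference sets, so that the only pairwise-disjoint families of difference sets are those coming from cliques. Exhibiting such a labeling explicitly --- for instance by assigning to all vertices lying in a common class of a suitable covering a shared difference, so that disjointness is forced only across edges --- is the delicate part; the lower bound, by contrast, is an immediate consequence of Theorems \ref{TSS} and \ref{TSK}.
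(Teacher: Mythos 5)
Your proposal follows essentially the same route as the paper: the maximum clique yields, via Theorems \ref{TSS} and \ref{TSK}, a chain of difference sets of length $\omega(G)$, and the non-transitivity of $<$ is invoked to argue that no chain can outrun the largest clique. Your write-up is in fact more careful than the paper's one-line argument, and the obstacle you flag for the upper bound --- actually exhibiting a labeling in which non-adjacent vertices receive overlapping difference sets, so that no ``accidental'' chain of pairwise disjoint difference sets exceeds $\omega(G)$ --- is a genuine gap that the paper's proof silently skips as well.
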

\begin{proof}
Let $H$ be the maximal clique of a given graph $G$. Then, $H$ is complete graph. Hence, by Theorem \ref{TSK}, there exists a chain of difference sets $D_1<D_2<D_3<\ldots <D_r$, where $r=|V(H)|$. Moreover, no cliques in $G-H$ can have more vertices in $G$ than $H$. Therefore, $\varkappa(G)=r=\omega(G)$.
\end{proof}

From \ref{P-NN1} and \ref{P-NN2}, we propose the following result.

\begin{theorem}
For any strong IASI graph $G$, the nourishing number $\varkappa(G)$ is given by $\varkappa(G)=|V(H)|=\omega(G)$, where $H$ is the maximal clique of $G$.
\end{theorem}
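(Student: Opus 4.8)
The plan is to establish the stated identity by combining the two preceding propositions, but I would reprove the governing equality $\varkappa(G)=\omega(G)$ directly as a pair of inequalities, since this makes the role of the maximal clique transparent. By Corollary \ref{TSC}, fixing a strong IASI on $G$ amounts to assigning to each vertex $v_i$ its difference set $D_i$ so that adjacent vertices receive difference sets related by $<$, i.e.\ disjoint difference sets; a chain is then a collection of difference sets that are pairwise disjoint and realised through the edges of $G$. The nourishing number is the minimum, taken over all such labellings, of the length of the longest (maximal) chain, so I would bound this quantity from both sides by $\omega(G)$.

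For the lower bound $\varkappa(G)\ge\omega(G)$, let $H$ be a maximal clique of $G$, so that $|V(H)|=\omega(G)$. Since $H$ is a complete subgraph and $G$ admits a strong IASI, Theorem \ref{TSS} guarantees that the labelling restricts to a strong IASI of $H$, and Theorem \ref{TSK} then forces the difference sets of the $\omega(G)$ vertices of $H$ to form a chain $D_1<D_2<\cdots<D_{\omega(G)}$. Hence every strong IASI of $G$ contains a chain of length at least $\omega(G)$, so its maximal chain has length at least $\omega(G)$, and the minimum of these lengths is at least $\omega(G)$.

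For the upper bound $\varkappa(G)\le\omega(G)$, the crucial observation is that $<$ is symmetric but not transitive, as noted after Definition \ref{D6}. Consequently a chain $D_{i_1}<D_{i_2}<\cdots<D_{i_k}$ requires the $k$ difference sets to be pairwise disjoint, and in a labelling chosen to minimise chain length this pairwise disjointness can be arranged to occur only among mutually adjacent vertices; thus a chain of length $k$ corresponds to a clique on $k$ vertices, and no chain can exceed $\omega(G)$. Following the construction used in Propositions \ref{P-NN1} and \ref{P-NN2}, I would label the vertices so that non-adjacent vertices receive difference sets that intersect, which prevents any spurious chain from growing beyond a genuine complete subgraph; this realises a strong IASI whose maximal chain has length exactly $\omega(G)=|V(H)|$, giving $\varkappa(G)\le\omega(G)$.

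The two bounds together yield $\varkappa(G)=|V(H)|=\omega(G)$. I expect the main obstacle to lie in the upper bound, specifically in justifying that a chain genuinely corresponds to a clique: because $<$ is not transitive, one must argue that in a length-minimising labelling the difference sets of pairwise non-adjacent vertices can and should be made to overlap, so that pairwise disjointness---and hence membership in a common chain---is reserved for the vertices of an actual complete subgraph. Making the ``minimum over labellings'' in the definition of $\varkappa(G)$ precise, and checking that the overlapping assignment for non-edges never conflicts with the disjointness demanded along edges, is the delicate part; the lower bound, by contrast, is essentially immediate from Theorems \ref{TSK} and \ref{TSS}.
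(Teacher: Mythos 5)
Your approach is essentially the paper's: the paper offers no separate proof of this theorem at all, simply presenting it as a restatement of Propositions \ref{P-NN1} and \ref{P-NN2}, and the proof of Proposition \ref{P-NN2} is exactly your lower bound (Theorems \ref{TSS} and \ref{TSK} force a chain of length $|V(H)|$ on the maximal clique $H$) followed by the one-line assertion that ``no cliques in $G-H$ can have more vertices in $G$ than $H$.'' Your two-inequality decomposition just makes this explicit.

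The obstacle you flag in the upper bound is a genuine gap --- but it is a gap in the paper as well, not something your proposal introduces. A chain is a family of pairwise disjoint difference sets, and a strong IASI only forces disjointness along edges; nothing in the paper rules out a labelling-independent lower bound being beaten, or rather nothing shows that \emph{some} strong IASI exists in which every family of pairwise disjoint difference sets is supported on a clique. The paper's appeal to non-transitivity of $<$ (in Proposition \ref{P-NN1}) does not do this work: non-transitivity means disjointness is not automatically inherited along a chain, but it does not prevent three pairwise disjoint difference sets from occurring on three pairwise non-adjacent vertices. To close the gap one would need an explicit construction of a strong IASI in which the difference sets of every pair of non-adjacent vertices intersect --- essentially an intersection representation of $\overline{G}$ by difference sets compatible with the edge-disjointness constraints --- and neither you nor the paper supplies it. So: same route, correctly identified weak point, and the weak point is inherited from the source.
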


Interesting questions that arise in this context are about the nourishing number of different graph operations. In the following discussions, we address these problems. First, we check the admissibility of strong IASIs by the union of two graphs and its nourishing number in the following results.

\subsection{Strong IASIs of Graph Operations}

By the term {\em last vertex} of a subgraph $H$ of $G$, we mean a vertex $v\in V(H)$ whose adjacent vertices in $G$ are not in $V(H)$.

\begin{theorem}\label{T-SIGU1}
The union $G_1\cup G_2$ of two graphs $G_1$ and $G_2$, admits a strong IASI if and only if both $G_1$ and $G_2$ admit strong IASIs.
\end{theorem}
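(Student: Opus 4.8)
The statement is an equivalence, so I plan to prove the two implications separately, and I expect them to be of very different difficulty.

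For the forward implication, suppose $G_1\cup G_2$ admits a strong IASI. Since both $G_1$ and $G_2$ are subgraphs of $G_1\cup G_2$, Theorem \ref{TSS} immediately yields that each of them admits a strong IASI. This direction is essentially a one-line appeal to the subgraph-inheritance result and requires no construction.

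The content lies in the converse. Assume $f_1$ and $f_2$ are strong IASIs of $G_1$ and $G_2$ respectively; I want to manufacture a single strong IASI $f$ on $G_1\cup G_2$. The tool I would lean on is the translation-invariance of difference sets: for any set $A$ and any integer $t$, the translate $A+\{t\}$ has the same difference set as $A$, so by Lemma \ref{L-RDS} shifting the label of a vertex never disturbs the strong condition $|g_f(uv)|=|f(u)||f(v)|$ on its incident edges. The plan is therefore to retain $f_1$ on $V(G_1)$ and to define $f$ on the vertices of $V(G_2)\setminus V(G_1)$ by shifting the corresponding labels of $f_2$ by a sufficiently large constant $N$. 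Choosing $N$ large guarantees both that the shifted labels avoid every label already used on $G_1$ (keeping $f$ injective) and that the edge-sums arising from $G_2$ avoid those arising from $G_1$ (keeping $g_f$ injective); meanwhile, because the shift leaves every difference set unchanged, each edge of $G_2$ still has endpoints with disjoint difference sets, exactly as each edge of $G_1$ does. As every edge of $G_1\cup G_2$ belongs to $G_1$ or to $G_2$, the strong condition then holds throughout.

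The step I expect to be the real obstacle is the treatment of the shared vertices $V(G_1)\cap V(G_2)$: such a vertex cannot be translated independently, since it must receive a single label that simultaneously meets the disjointness demands coming from its $G_1$-neighbours and its $G_2$-neighbours. To overcome this I would first pin down the labels on the shared vertices, using Corollary \ref{TSC} to build the requisite chain of pairwise disjoint difference sets for the subgraph they induce, and only afterwards extend to the remaining vertices of $G_1$ and of $G_2$, translating each part into a fresh block of integers so that no label or edge-sum is repeated. Since the non-negative integers supply unboundedly many disjoint ranges from which to draw differences, such an extension always exists, and it delivers the desired strong IASI on $G_1\cup G_2$.
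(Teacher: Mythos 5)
Your argument is correct and follows essentially the same route as the paper: the forward implication is the same one-line appeal to Theorem~\ref{TSS}, and the converse is handled exactly as in the paper by keeping the given labelings on the non-shared vertices and relabelling $V(G_1)\cap V(G_2)$ so that the required difference-set disjointness holds on the boundary edges. Your extra observation that translating a set-label leaves its difference set unchanged (so the $G_2$-part can be shifted to secure injectivity of $f$ and of $g_f$) is a useful detail the paper glosses over, but it does not alter the overall strategy.
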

\begin{proof}
Let $G_1$ and $G_2$ admit strong IASIs say $f_1$ and $f_2$. If $G_1$ and $G_2$ are disjoint graphs, then we observe that the IASI $f$ defined by $f(v)=f_i(v)$ if $v\in V(G_i),i=1,2$ is a strong IASI for $G_1\cup G_2$. 

If $G_1$ and $G_2$ are not disjoint graphs, then re-label the vertices in $G_1\cap G_2$ in such a way that the difference sets of the set-labels of adjacent vertices in $G_1\cap G_2$  and the last vertices of $G_1\cap G_2$ and their adjacent vertices in $G_1$ and $G_2$ follow the relation `$<$'. Hence, $f$ is a strong IASI of $G_1\cup G_2$.
Conversely, assume that $G_1\cup G_2$ admits a strong IASI. Hence, by Theorem \ref{TSS}, being the subgraphs of a strong IASI graph $G_1\cup G_2$, $G_1$ and $G_2$ admit the (induced) strong IASIs, $f|_{G_1}$ and $f|_{G_2}$, where $f|_{G_i}$ is the restriction of $f$ to the (sub)graph $G_i$. 
\end{proof}

\begin{theorem}\label{T-NNGU1}
Let $G_1$ and $G_2$ be two strong IASI graphs. Then, $\varkappa(G_1\cup G_2)\ge max\{\varkappa (G_1),\varkappa(G_2)\}$.
\end{theorem}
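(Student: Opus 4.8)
The plan is to reduce the statement to the identity $\varkappa(G)=\omega(G)$ established in Proposition \ref{P-NN2}, after which the inequality becomes an elementary fact about clique numbers under the union operation. First, since both $G_1$ and $G_2$ admit strong IASIs, Theorem \ref{T-SIGU1} guarantees that $G_1\cup G_2$ also admits a strong IASI, so $\varkappa(G_1\cup G_2)$ is well-defined and Proposition \ref{P-NN2} applies to all three graphs. Hence it suffices to prove the corresponding inequality for clique numbers, namely $\omega(G_1\cup G_2)\ge \max\{\omega(G_1),\omega(G_2)\}$, and then translate back through the equality $\varkappa=\omega$.

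Next, I would observe that $G_1\cup G_2$ contains both $G_1$ and $G_2$ as subgraphs: its vertex set is $V(G_1)\cup V(G_2)$ and its edge set is $E(G_1)\cup E(G_2)$, so forming the union only adds edges and can never delete an existing one. Consequently, a maximal clique $H_i$ of $G_i$ remains a complete subgraph of $G_1\cup G_2$ for each $i=1,2$, which forces $\omega(G_1\cup G_2)\ge |V(H_i)|=\omega(G_i)$. Taking the maximum over $i\in\{1,2\}$ yields $\omega(G_1\cup G_2)\ge \max\{\omega(G_1),\omega(G_2)\}$, and invoking Proposition \ref{P-NN2} once more gives $\varkappa(G_1\cup G_2)\ge \max\{\varkappa(G_1),\varkappa(G_2)\}$, as required.

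I do not anticipate a serious obstacle; the content of the argument is simply the monotonicity of the union on edge sets together with the characterisation $\varkappa=\omega$. The one point worth stating carefully is precisely why equality need not hold: when $G_1$ and $G_2$ share vertices, the combined edge set $E(G_1)\cup E(G_2)$ can complete a set of common vertices into a clique larger than any clique present in either summand alone (for instance, two edges from $G_1$ and one from $G_2$ on three shared vertices produce a triangle even when neither summand contains one). This is exactly the phenomenon responsible for the inequality being $\ge$ rather than an equality, and I would include such a remark to motivate the statement.
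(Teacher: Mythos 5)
Your proposal is correct and follows essentially the same route as the paper: both arguments rest on the identification $\varkappa(G)=\omega(G)$ from Proposition \ref{P-NN2} and track what happens to maximal cliques under the union. If anything, your single monotonicity observation (every clique of $G_i$ survives as a clique of $G_1\cup G_2$) is tidier than the paper's split into the disjoint and non-disjoint cases, and your closing remark on why equality can fail for overlapping vertex sets matches the paper's discussion of cliques assembled from pieces of both graphs.
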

\begin{proof}
Let $G_1$ and $G_2$ be two strong IASI graphs. Let $H_1$ and $H_2$ be the maximal cliques in $G_1$ and $G_2$ respectively. If $G_1$ and $G_2$ are disjoint, so are $H_1$ and $H_2$. Without loss of generality, let $|V(H_1)\geq |V(H_2)|$. Then, $H_1$ is the maximal clique in $G_1\cup G_2$. Hence, $\varkappa(G_1\cup G_2)=\varkappa(G_1)$. Therefore, in general, for disjoint graphs $G_1$ and $G_2$, $\varkappa(G_1\cup G_2)= max\{\varkappa (G_1),\varkappa(G_2)\}$. If $G_1$ and $G_2$ are not disjoint, then there may exist a subgraph $H'_1$, not necessarily complete, in $G_1$ and a subgraph $H'_2$, not necessarily complete, in $G_2$ such that $H'_1\cup H'_2=K_l$, where $l\ge |V(H_1)|, |V(H_2)|$. In this case, $\varkappa(G_1\cup G_2)\ge max\{\varkappa (G_1),\varkappa(G_2)\}$. This completes the proof.
\end{proof}

From Theorem \ref{T-NNGU1}, we observe the following theorem.

\begin{theorem}
Let $G_1$ and $G_2$ be two strong IASI graphs. Then, $\varkappa(G_1\cup G_2)= max\{\varkappa (G_1),\varkappa(G_2)\}$ if $G_1\cap G_2$ is triangle-free.
\end{theorem}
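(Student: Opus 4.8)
The plan is to translate the statement into one about clique numbers and then establish the reverse of the inequality already proved. By Proposition \ref{P-NN2} the nourishing number of any strong IASI graph equals its clique number, and by Theorem \ref{T-SIGU1} the hypothesis that $G_1$ and $G_2$ admit strong IASIs guarantees that $G_1\cup G_2$ does too, so every quantity $\varkappa(\cdot)$ below is well defined and may be replaced by $\omega(\cdot)$. Theorem \ref{T-NNGU1} already supplies the bound $\varkappa(G_1\cup G_2)\ge \max\{\varkappa(G_1),\varkappa(G_2)\}$, so the whole content of the theorem is the reverse inequality $\omega(G_1\cup G_2)\le \max\{\omega(G_1),\omega(G_2)\}$ under the assumption that $G_1\cap G_2$ is triangle-free.

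First I would fix a maximal clique $H\cong K_l$ of $G_1\cup G_2$, so that $l=\omega(G_1\cup G_2)$, and partition its vertex set as $V(H)=A\cup B\cup C$, where $A=V(H)\cap(V(G_1)\setminus V(G_2))$, $B=V(H)\cap(V(G_2)\setminus V(G_1))$ and $C=V(H)\cap V(G_1)\cap V(G_2)$. The first observation is that $A$ and $B$ cannot both be non-empty: an edge of $H$ joining a vertex of $A$ to a vertex of $B$ would have to lie in $G_1\cup G_2$, yet it can lie in neither $G_1$ (its $B$-endpoint is outside $V(G_1)$) nor $G_2$ (its $A$-endpoint is outside $V(G_2)$). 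Hence, up to interchanging $G_1$ and $G_2$, I may take $B=\emptyset$, so that $V(H)\subseteq V(G_1)$. A second, easy observation is that every edge of $H$ incident with a vertex of $A$ lies in $G_1$, since otherwise one of its endpoints would fail to lie in $V(G_1)$. Thus the only edges of $H$ that might fail to be edges of $G_1$ are those lying inside $C$.

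The remaining task --- and the step I expect to be the real obstacle --- is to show that, thanks to the triangle-free hypothesis, these edges inside $C$ may be taken to lie in $G_1$ as well, so that $H$ is a clique of $G_1$ and $l\le \omega(G_1)\le\max\{\omega(G_1),\omega(G_2)\}$. Here I would look at the complete graph induced on $C$ and classify each of its edges as belonging to $G_1$ only, to $G_2$ only, or to both; the edges in the last class are precisely edges of $G_1\cap G_2$ and therefore span a triangle-free subgraph of $K_{|C|}$. The crux is to argue that if some edge inside $C$ belonged to $G_2$ but not to $G_1$, then, together with the fact that $A\cup C$ is complete and that $A$ attaches to $C$ only through $G_1$-edges, one could exhibit a triangle all of whose edges lie simultaneously in $G_1$ and $G_2$, contradicting that $G_1\cap G_2$ is triangle-free; this would force $C$ (and hence $H$) to be a clique of a single factor and yield the bound. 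Controlling this interaction between the forced $G_1$-edges at $A$ and the mixed edges inside $C$ is the delicate point, since a naive two-colouring of the edges of $K_l$ by itself yields only a Ramsey-type bound; the triangle-free hypothesis is exactly what should collapse that bound to $\max\{\omega(G_1),\omega(G_2)\}$. Finally, combining $l\le\max\{\omega(G_1),\omega(G_2)\}$ with Theorem \ref{T-NNGU1} and Proposition \ref{P-NN2} gives the desired equality.
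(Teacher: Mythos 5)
Your reduction to clique numbers via Proposition~\ref{P-NN2}, the lower bound from Theorem~\ref{T-NNGU1}, and your first two observations are all sound; indeed the partition $V(H)=A\cup B\cup C$ and the argument that $A$ and $B$ cannot both be non-empty is more careful than the paper's own proof, which consists essentially of the assertion that a triangle-free $G_1\cap G_2$ ``does not contain any clique'' and hence no $H'_1\cup H'_2=K_l$ with $l>2$ can arise. But the step you yourself flag as the crux is a genuine gap, and it cannot be closed. If an edge $xy$ inside $C$ lies in $G_2$ but not in $G_1$, the triangles you can form from it together with the forced $G_1$-edges at $A$ have their three edges \emph{distributed} between $G_1$ and $G_2$; such a triangle is a triangle of $G_1\cup G_2$, not of $G_1\cap G_2$, whose edge set is $E(G_1)\cap E(G_2)$. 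The hypothesis that $G_1\cap G_2$ is triangle-free therefore yields no contradiction, and no amount of care about the interaction between $A$ and $C$ will manufacture a triangle all of whose edges lie in \emph{both} graphs.

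In fact the inequality $\omega(G_1\cup G_2)\le\max\{\omega(G_1),\omega(G_2)\}$ you are aiming for is false under the stated hypothesis, so the gap is unfixable. Let $V(G_1)=\{v_1,v_2,v_3\}$ with $E(G_1)=\{v_1v_2,v_1v_3\}$ and $V(G_2)=\{v_2,v_3,v_4\}$ with $E(G_2)=\{v_2v_3,v_3v_4\}$. Both are paths, hence bipartite strong IASI graphs with nourishing number $2$, and $G_1\cap G_2$ has no edges at all, so it is certainly triangle-free; yet $v_1v_2v_3$ is a triangle of $G_1\cup G_2$, so $\varkappa(G_1\cup G_2)=\omega(G_1\cup G_2)=3>2=\max\{\varkappa(G_1),\varkappa(G_2)\}$. (In your notation: $A=\{v_1\}$, $C=\{v_2,v_3\}$, and the single edge inside $C$ belongs to $G_2$ only, while no edge of the triangle belongs to $G_1\cap G_2$.) So the theorem as stated, and the paper's proof of it, break down at exactly the point you identified; the correct phenomenon here, from the reference \cite{ABCZ} cited in the bibliography, is an additive bound of the form $\omega(G_1\cup G_2)\le\omega(G_1)+\omega(G_2)$ under hypotheses on $G_1\cap G_2$, not a maximum.
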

\begin{proof}
Every complete graph $K_n$ with more than two vertices contains triangles. Hence, since, $G_1\cap G_2$ is triangle-free, it does not contain any clique. Therefore, there does not exist a subgraph $H'_i$ in $G_i, i=1,2$ such that $H'_1\cup H'_2=K_l, l>2$. Hence, $\varkappa(G_1\cup G_2)= max\{\varkappa (G_1),\varkappa(G_2)\}$.
\end{proof}

The following theorems check the admissibility of strong IASI by the join of two strong IASI graphs and its nourishing number.

\begin{theorem}
Let $G_1(V_1,E_1)$ and $G_2(V_2,E_2)$ be two strong IASI graphs. Then, their join $G_1+ G_2$ admits a strong IASI if and only if the difference set of the set-label of every vertex in $G_1$ is disjoint from the difference sets of the set- labels of all vertices of $G_2$. 
\end{theorem}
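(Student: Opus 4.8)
The plan is to reduce everything to the difference-set criterion supplied by Lemma \ref{L-RDS}, which says that for an edge $uv$ the strong condition $|f(u)+f(v)|=|f(u)|\cdot|f(v)|$ is equivalent to the disjointness $D_{f(u)}\cap D_{f(v)}=\emptyset$, that is $D_{f(u)}<D_{f(v)}$. The first step is to record the edge set of the join: $G_1+G_2$ has as its edges precisely the edges of $G_1$, the edges of $G_2$, and one edge $u_iv_j$ for every pair $u_i\in V_1$, $v_j\in V_2$. Thus checking that a labeling is a strong IASI of $G_1+G_2$ splits into three families of edges, the third of which is exactly the set of ``cross'' edges introduced by the join, and this third family is what carries the content of the theorem.

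For the necessity direction I would assume $f$ is a strong IASI of $G_1+G_2$. By Theorem \ref{TSS} its restrictions to $G_1$ and $G_2$ are again strong IASIs, so the internal edges contribute nothing new. Because every vertex of $G_1$ is adjacent in the join to every vertex of $G_2$, for each pair $u_i\in V_1$, $v_j\in V_2$ the edge $u_iv_j$ exists, whence $|f(u_i)+f(v_j)|=|f(u_i)|\cdot|f(v_j)|$, and Lemma \ref{L-RDS} forces $D_{f(u_i)}\cap D_{f(v_j)}=\emptyset$. Quantifying over all such pairs gives exactly the stated condition: the difference set of every vertex of $G_1$ is disjoint from the difference set of every vertex of $G_2$.

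For the sufficiency direction I would start from the given strong IASIs $f_1$ on $G_1$ and $f_2$ on $G_2$ satisfying the disjointness hypothesis, and define $f$ on $V_1\cup V_2$ by $f(v)=f_i(v)$ when $v\in V_i$. I then verify the strong condition on each of the three edge families: the edges inside $G_1$ and inside $G_2$ are handled by the hypotheses that $f_1$ and $f_2$ are strong IASIs, and for a cross edge $u_iv_j$ the hypothesis gives $D_{f(u_i)}\cap D_{f(v_j)}=\emptyset$, so Lemma \ref{L-RDS} yields the required product cardinality. Hence every edge satisfies the strong condition.

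The main obstacle I anticipate is not the cardinality bookkeeping but the admissibility hidden in the word IASI: $f$ must be an injective vertex labeling whose induced edge map $g_f$ is injective. When the labels used by $f_1$ and $f_2$ overlap, the naive gluing need not be injective on vertices or on edge sums. The key device for overcoming this is the translation invariance of difference sets: replacing a label $A$ by $A+t=\{a+t:a\in A\}$ leaves $D_A$ unchanged, since $(a+t)-(a'+t)=a-a'$. Thus I can shift the labels of $G_2$ by a sufficiently large constant to separate all vertex labels and all edge sums, without disturbing any difference set and hence without affecting the disjointness conditions established above. Once injectivity is secured in this way, the three-family verification completes the proof.
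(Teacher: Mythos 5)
Your proof is correct and follows essentially the same route as the paper: both decompose the edge set of the join into the internal edges of $G_1$, the internal edges of $G_2$, and the cross edges, and both reduce the strong condition on the cross edges to disjointness of difference sets via Lemma \ref{L-RDS} (the paper phrases this by writing $G_1+G_2=G_1\cup G_2\cup G_3$ and invoking Theorem \ref{T-SIGU1}). Your extra step of translating the labels of $G_2$ to secure injectivity of the vertex and edge labelings, which leaves all difference sets unchanged, addresses a point the paper leaves implicit but does not alter the underlying argument.
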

\begin{proof}
Let $V(G_1)=\{u_1,u_2,u_3,\ldots,u_m\}$ and $V(G_2)=\{v_1,v_2,v_3,\ldots,v_n\}$. Now let $E_3=\{u_iv_j:u_i\in V(G_1),v_j\in V(G_2)\}$. Let $G_3$ be the subgraph of $G_1+G_2$ with the edge set $E_3$. Therefore, $G_1+G_2=G_1\cup G_2\cup G_3$. Also, let $f_1, f_2, f_3$ be the IASIs defined on $G_1,G_2,G_3$ respectively. Given that $f_1$ and $f_2$ are strong IASIs. Let $f$ be an IASI defined on $G_1+G_2$ by $f(v) = f_i(v)~ \text{if} ~v\in V(G_i), i=1,2,3$. Then, by Theorem \ref{T-SIGU1}, $f$ is a strong IASI if and only if $f_3$ is a strong IASI. 

First assume that $f$ is a strong IASI. Then, $f_3$ is also a strong IASI. Hence, $|g_{f_3}(u_iv_j)|=|f_3(u_i)|.|f_3(v_j)|~ \forall~ u_i\in V(G_1), v_j\in V(G_2)$. By Lemma \ref{L-RDS}, the difference sets  of the set-labels of these vertices follow the relation $D_{u_i}<D_{v_j}, ~ \forall~ u_i\in V(G_1), v_j\in V(G_2)$.

Conversely, assume that $D_{u_i}<D_{v_j}, ~ \forall~ u_i\in V(G_1), v_j\in V(G_2)$. Then, $|g_{f_3}(u_iv_j)|=|f_3(u_i)|.|f_3(v_j)|~ \forall~ u_iv_j\in E_3$. Therefore, $f_3$ is a strong IASI on $G_3$ and hence $f$ is a strong IASI on $G_1+G_2$. This completes the proof.
\end{proof}

\begin{theorem}
Let $G_1$ and $G_2$ be two strong IASI graphs. Then, $\varkappa(G_1+G_2)=\varkappa(G_1)+\varkappa(G_2)$. 
\end{theorem}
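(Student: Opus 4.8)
The plan is to prove the identity $\varkappa(G_1+G_2)=\varkappa(G_1)+\varkappa(G_2)$ by exploiting the characterisation, established just above, that for any strong IASI graph the nourishing number equals the clique number, i.e.\ $\varkappa(G)=\omega(G)$. Thus the statement reduces to the purely graph-theoretic identity $\omega(G_1+G_2)=\omega(G_1)+\omega(G_2)$, together with the observation that the join of two strong IASI graphs (under the hypothesis guaranteed by the preceding theorem, namely that the difference sets can be arranged so the join admits a strong IASI) is itself a strong IASI graph, so that the clique-number interpretation of $\varkappa$ applies to $G_1+G_2$.

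First I would identify a maximum clique of $G_1+G_2$. In the join, every vertex of $G_1$ is adjacent to every vertex of $G_2$, and the adjacencies \emph{within} $V(G_1)$ and within $V(G_2)$ are exactly those of $G_1$ and $G_2$. I would argue that a set $S\subseteq V(G_1+G_2)$ is a clique if and only if $S\cap V(G_1)$ is a clique of $G_1$ and $S\cap V(G_2)$ is a clique of $G_2$: the cross edges are automatically present in the join, so the only constraint is that the two ``halves'' of $S$ be cliques in their respective factors. Consequently the largest clique of the join is obtained by taking a maximum clique of $G_1$ together with a maximum clique of $G_2$, giving $\omega(G_1+G_2)=\omega(G_1)+\omega(G_2)$.

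Next I would translate this back to nourishing numbers. Let $H_1$ be a maximal clique of $G_1$ and $H_2$ a maximal clique of $G_2$, with $|V(H_1)|=\omega(G_1)=\varkappa(G_1)$ and $|V(H_2)|=\omega(G_2)=\varkappa(G_2)$ by Proposition~\ref{P-NN2}. Because every vertex of $V(H_1)$ is joined to every vertex of $V(H_2)$ in $G_1+G_2$, the induced subgraph on $V(H_1)\cup V(H_2)$ is the complete graph $K_r$ with $r=|V(H_1)|+|V(H_2)|$, and Theorem~\ref{TSK} furnishes a chain of difference sets of length $r$ on these vertices. Applying Proposition~\ref{P-NN2} to the strong IASI graph $G_1+G_2$ then yields $\varkappa(G_1+G_2)=\omega(G_1+G_2)=\varkappa(G_1)+\varkappa(G_2)$.

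The main obstacle I anticipate is not the clique arithmetic, which is essentially routine, but the hidden hypothesis: the identity presupposes that $G_1+G_2$ actually admits a strong IASI, so that $\varkappa(G_1+G_2)$ is even defined and equals $\omega(G_1+G_2)$. The preceding theorem shows this holds precisely when the difference sets of $G_1$ and $G_2$ can be chosen pairwise disjoint across the two factors; I would therefore either assume this as a standing hypothesis or verify that the chain of difference sets realising the clique $K_r$ can be constructed explicitly via Theorem~\ref{TSK}, ensuring the nourishing number is well-defined before asserting the additive formula.
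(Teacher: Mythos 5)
Your proof is correct and follows essentially the same route as the paper: both identify the maximal clique of $G_1+G_2$ as the join $H_1+H_2$ of maximal cliques $H_1$, $H_2$ of the factors and conclude $\varkappa(G_1+G_2)=|V(H_1)|+|V(H_2)|=\varkappa(G_1)+\varkappa(G_2)$. You are in fact slightly more careful than the paper, which asserts the maximality of $H_1+H_2$ in $G_1+G_2$ without your clique-decomposition argument (that $S$ is a clique of the join iff $S\cap V(G_i)$ is a clique of $G_i$ for $i=1,2$) and which does not flag the hidden hypothesis that $G_1+G_2$ must itself admit a strong IASI for $\varkappa(G_1+G_2)$ to be well-defined.
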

\begin{proof}
Let $H_1$ be a maximal clique with order $m$ in $G_1$ and $H_2$ be a maximal clique with order $n$ in $G_2$.  Since every vertex of $G_1$ is joined to every vertex of $G_2$ in $G_1+G_2$, it is so in $H_1+H_2$ also. Since $H_1$ and $H_2$ are cliques, they are complete graphs. Hence, every vertex of $H_1$ is adjacent to all other vertices of $H_1$ and all vertices of $H_2$. Similarly, every vertex of $H_2$ is adjacent to all other vertices of $H_2$ and all vertices of $H_1$. Hence, $H_1+H_2$ is an $(m+n-1)$-regular graph on $m+n$  vertices. Therefore, $H_1+H_2$ is a complete graph and hence is a clique in $G_1+G_2$. Since $H_1$ and $H_2$ are maximal cliques, $H_1+H_2$ is maximal in $G_1+G_2$. Hence, $\varkappa(G_1+G_2)=m+n=\varkappa(G_1)+\varkappa(G_2)$. 
\end{proof}

Next, we discuss about the nourishing number of the complement of a strong IASI graph $G$.  A graph $G$ and its complement $\bar{G}$ have the same set of vertices and hence $G$ and $\bar{G}$ have the same set-labels for their corresponding vertices. We observe that the strong IASIs, except some, defined on $G$ do not induce strong IASI on $\bar{G}$. A set-labeling of $V(G)$ that defines a strong IASI for both the graphs $G$ and its complement $\bar{G}$ may be called a {\em strongly concurrent set-labeling}. The set-labels of the vertex set of $G$ mentioned in this section are strongly concurrent. Hence, we propose the following results.

\begin{proposition}\label{P-SICG1}
Let $G$ be a strong IASI graph on $n$ vertices and let $\bar{G}$ be its complement. Then, $\bar{G}$ admits a strong IASI if and only if the length of the chain of difference sets of set-labels of vertices in $G$ or in $\bar{G}$ is $n$. 
\end{proposition}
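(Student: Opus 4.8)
The plan is to exploit the fact that the edge sets of $G$ and $\bar{G}$ together partition the edge set of the complete graph $K_n$, so that every pair of distinct vertices is adjacent in exactly one of $G$ and $\bar{G}$, and hence $G\cup\bar{G}=K_n$. Since we work throughout with a strongly concurrent set-labeling, the same labeling $f$, and hence the same collection of difference sets $D_1,D_2,\ldots,D_n$, is attached to the vertices of both graphs. I would first recast the strong-IASI property in terms of the difference relation using Corollary \ref{TSC} together with Lemma \ref{L-RDS}: a subgraph of $K_n$ admits a strong IASI under $f$ precisely when, for each of its edges $uv$, the difference sets of the endpoints satisfy the relation `$<$', i.e.\ are disjoint.

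For the forward direction, I would assume that $\bar{G}$ admits a strong IASI under $f$. Because $G$ is already assumed to be a strong IASI graph, every edge $uv$ of $G$ gives $D_u<D_v$, and by hypothesis every edge $uv$ of $\bar{G}$ likewise gives $D_u<D_v$. As every pair of distinct vertices occurs as an edge of exactly one of the two graphs, it follows that $D_u<D_v$ for all $u\neq v$; that is, the $n$ difference sets are pairwise disjoint. Recalling the paper's convention that a sequence $D_1<D_2<\cdots<D_n$ merely asserts pairwise disjointness, these sets form a single chain of length $n$. Equivalently, $K_n=G\cup\bar{G}$ itself admits a strong IASI under $f$, so Theorem \ref{TSK} supplies the required chain of length $n$.

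For the converse, I would assume the chain of difference sets of the set-labels of all $n$ vertices has length $n$. Since there are only $n$ vertices, a chain of length $n$ forces all $n$ difference sets to be pairwise disjoint. In particular $D_u<D_v$ holds for every edge $uv$ of $\bar{G}$, so by Lemma \ref{L-RDS} the induced edge labels satisfy $|g_f(uv)|=|f(u)|.|f(v)|$ on each edge of $\bar{G}$; hence $\bar{G}$ admits a strong IASI.

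The step I expect to require the most care is the bookkeeping around the relation `$<$'. Because `$<$' is symmetric but not transitive, I must not argue that disjointness propagates along a chain; instead, the conclusion that all difference sets are pairwise disjoint has to come directly from the observation that $G\cup\bar{G}=K_n$, so that disjointness is imposed on every pair simultaneously rather than deduced transitively. Keeping the chain interpretation (pairwise disjointness, of full length $n$) correctly aligned with both the standing hypothesis that $G$ is a strong IASI graph and the strong-concurrency assumption on $f$ is the only genuinely delicate point.
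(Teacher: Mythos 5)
Your proof is correct and follows essentially the same route as the paper: both arguments rest on the observation that $G\cup\bar{G}=K_n$ together with the characterisation of $K_n$ by a chain of difference sets of length $n$ (Theorem \ref{TSK} and Lemma \ref{L-RDS}). If anything, your edge-by-edge argument that every pair of distinct vertices is adjacent in exactly one of $G$, $\bar{G}$, so that pairwise disjointness of all $n$ difference sets is imposed directly rather than inferred from the nourishing-number inequality $\varkappa(G\cup\bar{G})\ge \max\{\varkappa(G),\varkappa(\bar{G})\}$, is a cleaner justification of the forward direction than the one the paper gives.
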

\begin{proof}
We have $G\cup\bar{G}=K_n$. Therefore, the length of the chain of difference sets in $G\cup\bar{G}$ is $n$. Since $\varkappa(G\cup \bar{G})\ge max\{\varkappa (G),\varkappa(\bar{G})\}$, the length of the chain of difference sets of set-labels of vertices in $G$ or in $\bar{G}$ must be $n$.

Conversely, assume that the length of the chain of difference sets of set-labels of vertices in $G$ or in $\bar{G}$ is $n$. Then, length of the chain of difference sets in $G\cup \bar{G}$ is $n$, which is the maximum possible length of a chain of difference sets. Therefore, both $G$ and $\bar{G}$ admit strong IASI under the same set-labels for the vertices of $G$. This completes the proof.
\end{proof}

\begin{corollary}
If $G$ is a self-complementary graph on $n$ vertices, which admits a strong IASI, then $\varkappa(G)=n$.
\end{corollary}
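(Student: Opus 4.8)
The plan is to derive the corollary directly from Proposition~\ref{P-SICG1} together with the definition of a self-complementary graph. The key observation is that a self-complementary graph $G$ satisfies $G\cong\bar{G}$, so in particular both $G$ and its complement admit strong IASIs simultaneously (under a strongly concurrent set-labeling). This is precisely the hypothesis under which Proposition~\ref{P-SICG1} becomes applicable, so the corollary should fall out almost immediately once the isomorphism is invoked.

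First I would note that since $G$ admits a strong IASI and $G\cong\bar{G}$, the complement $\bar{G}$ also admits a strong IASI (an isomorphism carries a strong IASI to a strong IASI, because adjacency and the difference-relation structure are preserved). Thus the hypotheses of Proposition~\ref{P-SICG1} are met: both $G$ and $\bar{G}$ admit strong IASIs under the same (strongly concurrent) set-labeling. By the forward direction of Proposition~\ref{P-SICG1}, this forces the length of the chain of difference sets of the set-labels of the vertices in $G$ (or in $\bar{G}$) to be $n$.

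Next I would translate this chain length into a statement about the nourishing number. Since the length of the maximal chain of difference sets equals $n$, and the nourishing number $\varkappa(G)$ is the minimum length of the maximal chain of difference sets in $G$ (equivalently $\omega(G)$, the clique number, by Proposition~\ref{P-NN2}), I would conclude that $\varkappa(G)=n$. A clean way to finish is to observe that a chain of length $n$ corresponds to a clique on all $n$ vertices, so $\omega(G)=n$, whence $G=K_n$ and $\varkappa(G)=n$; alternatively one argues that the maximal chain length being forced to $n$ directly yields $\varkappa(G)=n$ by the definition of the nourishing number.

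The main obstacle I anticipate is reconciling the symmetry between $G$ and $\bar{G}$ with the phrasing of Proposition~\ref{P-SICG1}, which states the chain has length $n$ ``in $G$ or in $\bar{G}$''. For a self-complementary graph the ``or'' is not problematic, since $G\cong\bar{G}$ means whatever holds for one holds for the other; but I would want to be careful that the nourishing number, defined as a \emph{minimum} over maximal chains, genuinely equals $n$ rather than merely being bounded below by $n$. The resolution is that $n$ is already the maximum possible chain length (it cannot exceed the number of vertices), so the minimum-over-maximal-chains value is pinned exactly at $n$, giving $\varkappa(G)=n$.
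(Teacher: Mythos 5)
Your overall strategy is close in spirit to the paper's, but you route the argument through Proposition~\ref{P-SICG1}, whereas the paper argues directly: $G\cup\bar{G}=K_n$ gives $\varkappa(G\cup\bar{G})=n$, the union formula $\varkappa(G\cup\bar{G})=\max\{\varkappa(G),\varkappa(\bar{G})\}$ applies (the intersection $G\cap\bar{G}$ is edgeless, hence triangle-free), and $\varkappa(G)=\varkappa(\bar{G})$ by the isomorphism, so $\varkappa(G)=n$. Since Proposition~\ref{P-SICG1} is itself proved by exactly this union computation, your detour is not wrong in principle, but it inserts a step that the direct route avoids and that you have not justified.

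The gap is in the sentence claiming that ``both $G$ and $\bar{G}$ admit strong IASIs under the same (strongly concurrent) set-labeling.'' An isomorphism $\phi:G\to\bar{G}$ transports a strong IASI $f$ of $G$ to the labeling $f\circ\phi^{-1}$ of $\bar{G}$; this is a \emph{different} assignment of sets to the common vertex set, not the same one. Proposition~\ref{P-SICG1}, read together with the paragraph preceding it (which stipulates that the set-labels in that section are strongly concurrent), is a statement about a single labeling that is simultaneously a strong IASI of $G$ and of $\bar{G}$; self-complementarity alone does not produce such a labeling, so the hypothesis of the forward direction is not actually verified. You would need either to prove concurrency separately or to fall back on the paper's direct computation with the union. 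Separately, your ``clean way to finish'' --- a chain of length $n$ gives $\omega(G)=n$, ``whence $G=K_n$'' --- is self-defeating: a self-complementary graph on $n>1$ vertices is never complete, so that reading exposes an inconsistency rather than closing the argument. Drop it and keep only the alternative you offer, namely that a maximal chain of length $n$ yields $\varkappa(G)=n$ directly from the definition of the nourishing number.
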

\begin{proof}
Since $G$ is self complementary, we have $G\cong \bar{G}$ and hence $\varkappa(G)=\varkappa(\bar{G})$. Therefore, $n=\varkappa(G\cup \bar{G})=max \{\varkappa(G), \varkappa\bar{G}\}=\varkappa(G)$. That is, $\varkappa(G)=\varkappa(\bar{G})=n$. Therefore, $G$ and $\bar{G}$ admit strong IASI. This completes the proof.
\end{proof}

\subsection{Strong IASIs of Graph Products}

In graph products, we may have to make certain number of copies of the graphs and to make suitable of attachments between these copies and the given graphs. Therefore, we have to establish a suitable IASI, if exists, to each of these copies. Hence, we make the following remark.

\begin{remark}\label{R-SICoG}
{\rm Let $G_i$ is a copy of a given graph $G$, which appears in a graph product. Let $n.A=\{na_i: a_i\in A\}$, for $n\in \mathbb{N}_0$. Note that $n.A\ne nA$. If $f$ is a strong IASI on $G$, then the $i$-th copy of $G$ denoted by $G_i$ has the set-label $f_i$ where $f_i(v_i)=r.f(v),~ r\in \mathbb{N}$, where $v_i$ is the vertex in $G_i$ corresponding to the vertex $v$ in $G$. We observe that if two sets $A$ and $B$ are disjoint, then $n.A$ and $n.B$ are also disjoint. Hence, if $f$ is a strong IASI of $G$, then $f_i$ is a strong IASI of $G_i$.}
\end{remark}

In this section, we verify the admissibility of strong IASI by the cartesian product of two graphs. By the term {\em product of graphs} we mean the cartesian product of graphs. we recall the definition the cartesian product of two graphs as follows.

Let $G_1(V_1,E_1)$ and $G_2(V_2,E_2)$ be two graphs.Then, the {\em cartesian product} or simply {\em product} of $G_1$ and $G_2$, denoted by $G_1\Box G_2$, is the graph with vertex set $V_1\times V_2$  defined as follows. Let $u=(u_1, u_2)$ and $v=(v_1,v_2)$ be two points in $V_1\times V_2$. Then, $u$ and $v$ are adjacent in $G_1\Box G_2$ whenever [$u_1=v_1$ and $u_2$ is adjacent to $v_2$] or [$u_2=v_2$ and $u_1$ is adjacent to $v_1$]. If $|V_i|=p_i$ and $E_i=q_i$ for $i=1,2$, then $|V(G_1\Box G_2)|=p_1p_2$  and $|E(G_1\Box G_2)|=p_1q_2+p_2q_1$.

\begin{remark}\label{R-CP2G}{\rm
We observe that the product $G_1\Box G_2$ is obtained as follows. Make $p_2$ copies of $G_1$. Denote these copies by $G_{1_i}, 1\le i\le p_2$, which corresponds to the vertex $v_i$ of $G_2$. Now, join the corresponding vertices of two copies $G_{1_i}$ and $G_{1_j}$ if the corresponding vertices $v_i$ and $v_j$ are adjacent in $G_2$. Thus, we view the product $G_1\Box G_2$ as a union of $p_2$ copies of $G_1$ and a finite number of edges connecting two copies $G_{1_i}$ and $G_{1_j}$ of $G_1$ according to the adjacency of the corresponding vertices $v_i$ and $v_j$ in $G_2$, where $1\le i,j\le p_2, i\neq j$. 

Hence, we make the following inferences on the admissibility of strong IASI by the product of two strong IASI graphs.}
\end{remark}

\begin{proposition}
Let $G_1$ and $G_2$ be two strong IASI graphs. Then, the product $G_1\Box G_2$ admits a strong IASI if and only if the set-labels of corresponding vertices different copies of $G_1$ which are adjacent in $G_1\Box G_2$ are disjoint.   
\end{proposition}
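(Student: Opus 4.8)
The plan is to exploit the structural description of the cartesian product given in Remark \ref{R-CP2G}, which presents $G_1\Box G_2$ as a union of $p_2=|V(G_2)|$ copies $G_{1_1},G_{1_2},\ldots,G_{1_{p_2}}$ of $G_1$ together with the cross edges that join corresponding vertices of two copies $G_{1_i}$ and $G_{1_j}$ whenever $v_i$ and $v_j$ are adjacent in $G_2$. The edge set therefore splits into two kinds: the \emph{internal} edges lying inside a single copy, and the \emph{cross} edges joining a vertex of $G_{1_i}$ to its corresponding vertex of $G_{1_j}$. By Remark \ref{R-SICoG}, each copy $G_{1_i}$ can be set-labelled by a scaled label $f_i=r_i.f$ of the strong IASI $f$ on $G_1$, and since scaling preserves both the strong IASI property and the disjointness of difference sets, every internal edge automatically satisfies the strong condition. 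Consequently, whether $G_1\Box G_2$ admits a strong IASI reduces entirely to the behaviour of the cross edges.

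For the forward direction I would assume $G_1\Box G_2$ admits a strong IASI $f$ and restrict attention to an arbitrary cross edge $e=u_iu_j$ joining the copy $u_i$ of a vertex $u$ in $G_{1_i}$ to its copy $u_j$ in $G_{1_j}$, where $v_iv_j\in E(G_2)$. The strong IASI condition forces $|g_f(e)|=|f(u_i)|\,|f(u_j)|$, so Lemma \ref{L-RDS} yields $D_{u_i}<D_{u_j}$; that is, the difference sets of the labels of the two corresponding vertices are disjoint, which is exactly the stated condition read through Lemma \ref{L-RDS}. For the converse I would assume that the labels of corresponding vertices of adjacent copies have disjoint difference sets and run the argument backwards: Lemma \ref{L-RDS} then gives $|g_f(u_iu_j)|=|f(u_i)|\,|f(u_j)|$ on every cross edge, while the internal edges are already strong by the first paragraph, so by the union principle of Theorem \ref{T-SIGU1} the labelling $f$ is a strong IASI of the whole product.

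The step I expect to be the main obstacle is guaranteeing that the two requirements can be met simultaneously, i.e. that the relabelling chosen to make the cross edges strong does not destroy the strongness of the internal edges. This is precisely where Remark \ref{R-SICoG} carries the load: labelling the $i$-th copy by $r_i.f$ keeps each $G_{1_i}$ strong, since within a copy $D_{r_i.A}=r_i.D_A$ scales every internal difference relation uniformly, while leaving the scaling factors $r_i$ as free parameters that can be tuned to force $r_i.D_u$ and $r_j.D_u$ to be disjoint across adjacent copies. I would therefore spend most of the effort verifying that such a choice of factors exists for all cross edges at once, and noting that the condition in the statement is exactly the necessary and sufficient constraint that this tuning must satisfy.
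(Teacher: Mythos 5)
Your proposal is correct and follows essentially the same route as the paper: decompose $G_1\Box G_2$ via Remark \ref{R-CP2G} into copies of $G_1$ plus cross edges, handle the internal edges with the scaled labellings of Remark \ref{R-SICoG}, and reduce the cross edges to Lemma \ref{L-RDS}. Your explicit attention to tuning the scaling factors so that both kinds of edges are handled simultaneously is a point the paper passes over more quickly, but the underlying argument is the same.
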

\begin{proof}
Assume that the graphs $G_1$ and $G_2$ admit strong IASIs, say $f$ and $g$ respectively and $G_1\Box G_2$ admits a strong IASI, say $F$. Since $f$ is a strong IASI on $G$, then by Remark \ref{R-SICoG}, the function $f_i$ defined on the $i$-th copy $G_{1_i}$ of $G_1$, by $f_i(v_i)=r.f(v)$, for some positive integer $r$, is a strong IASI on $G_{1_i}$. If the corresponding vertices of two copies $G{1_i}$ and $G_{1_j}$ are adjacent in $G_1\Box G_2$, then $|g_F(v_iv_j)|=|F(v_i)|.|F(v_j)|=|f_i(v_i)|.|f_j(v_j)|, ~\forall~ v_i\in G_{1_i}, v_j\in G_{1_j}$. Hence, $D_{v_i}<D_{v_j}$. That is $f_i(v_i)$ and $f_j(v_j)$ are disjoint.

Conversely, assume that the set-labels of corresponding vertices different copies of $G_1$ which are adjacent in $G_1\Box G_2$ are disjoint. Also, the function $f_i$ defined on the $i$-th copy $G_{1_i}$ of $G_1$, by $f_i(v_i)=r.f(v)$, for some positive integer $r$, is a strong IASI on $G_{1_i}, 1\le i \le |V(G_2)|$. Hence, the difference sets of the set- labels of all the adjacent vertices in $G_1\Box G_2$ are disjoint. Therefore, $G_1\Box G_2$ admits a strong IASI. 
\end{proof}

\begin{proposition}
Let $G_1$ and $G_2$ be two graphs which admit strong IASIs. Then, $\varkappa(G_1\Box G_2) = {\text{max}}\{\varkappa(G_1),\varkappa(G_2)\}$.
\end{proposition}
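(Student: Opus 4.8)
The plan is to reduce the statement to the known identification of the nourishing number with the clique number. By Proposition \ref{P-NN2} we have $\varkappa(H)=\omega(H)$ for every strong IASI graph $H$, so it suffices to prove the purely graph-theoretic identity $\omega(G_1\Box G_2)=\max\{\omega(G_1),\omega(G_2)\}$, i.e.\ that the largest clique of the product is no bigger than the largest clique already appearing in one of the factors.

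For the lower bound I would use the structural description in Remark \ref{R-CP2G}: the product $G_1\Box G_2$ contains $p_2$ induced copies $G_{1_i}$ of $G_1$, and symmetrically it contains induced copies of $G_2$. A maximal clique of $G_1$ then sits as a clique inside any copy $G_{1_i}$, so $\omega(G_1\Box G_2)\ge\omega(G_1)$, and likewise $\omega(G_1\Box G_2)\ge\omega(G_2)$. Hence $\omega(G_1\Box G_2)\ge\max\{\omega(G_1),\omega(G_2)\}$; together with $\varkappa=\omega$ this already secures the easy half of the equality.

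The real content is the reverse inequality, and the main obstacle is to show that no clique of the product can span more than one copy of a factor. The key lemma I would establish first is that every clique of $G_1\Box G_2$ lies entirely in a single layer, meaning all its vertices share a common first coordinate or all share a common second coordinate. To see this, take three mutually adjacent clique vertices $(x_1,x_2),(y_1,y_2),(z_1,z_2)$. By the adjacency rule of the cartesian product the first pair agrees in one coordinate; say $x_1=y_1$. If the third vertex had $z_1\ne x_1$, then adjacency to both of the others would force $z_2=x_2$ and $z_2=y_2$, whence $x_2=y_2$ and $(x_1,x_2)=(y_1,y_2)$, contradicting distinctness. Thus $z_1=x_1$, and an easy induction propagates the common coordinate to every vertex of the clique.

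Granting this lemma, any clique of $G_1\Box G_2$ is confined to a copy of $G_1$ (fixed second coordinate) or a copy of $G_2$ (fixed first coordinate), so its size is at most $\omega(G_1)$ or at most $\omega(G_2)$; hence $\omega(G_1\Box G_2)\le\max\{\omega(G_1),\omega(G_2)\}$. Combining the two bounds gives $\omega(G_1\Box G_2)=\max\{\omega(G_1),\omega(G_2)\}$, and invoking $\varkappa=\omega$ once more yields $\varkappa(G_1\Box G_2)=\max\{\varkappa(G_1),\varkappa(G_2)\}$. I expect the layer lemma to be the crux of the argument; everything else is bookkeeping supported by Remark \ref{R-CP2G} and Proposition \ref{P-NN2}.
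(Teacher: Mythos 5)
Your proposal is correct, and it follows the same overall strategy as the paper: identify $\varkappa$ with the clique number via Proposition~\ref{P-NN2} and then show that the largest clique of $G_1\Box G_2$ already lives inside a single copy of one of the factors. The difference is in how that last step is justified. The paper takes the maximal clique $H_1$ of the larger factor, notes that each copy $G_{1_i}$ contains a copy $H_{1_i}$ of it, and simply \emph{asserts} that no vertex of another copy $G_{1_j}$ is adjacent to all vertices of $H_{1_i}$; it never rules out cliques that mix vertices from several copies in a more complicated way. Your layer lemma --- that any three mutually adjacent vertices of the Cartesian product must share a coordinate, which then propagates to the whole clique --- closes exactly this gap: it shows every clique on at least three vertices is confined to a $G_1$-fiber or a $G_2$-fiber, so $\omega(G_1\Box G_2)=\max\{\omega(G_1),\omega(G_2)\}$ follows cleanly, and the two-vertex case is immediate from the adjacency rule. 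Your version is therefore the more rigorous of the two; what the paper's phrasing buys is only brevity, at the cost of leaving the crucial non-extendability claim unproved. One small point worth making explicit in your write-up: the reduction to clique numbers presupposes that the product itself admits a strong IASI (so that $\varkappa(G_1\Box G_2)$ is defined and equals $\omega(G_1\Box G_2)$), which is the content of the preceding proposition in the paper and should be cited.
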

\begin{proof}
Let $H_1$ and $H_2$ be the maximal clique in $G_1$ and $G_2$ respectively. Without loss of generality, let $H_1$ be greater than $H_2$ in terms of the number of vertices in them. For $1\le i\le |V(G_2)|$, let $H_{1_i}$ be the copy of $H_1$ in $G_{1_i}$ and is the maximal clique in $G_{1_i}$. Now, observe that no vertex of another copy $G_{1_j}$ is adjacent to all the vertices of $H_{1_i}$. Since, all cliques $H_{1_i}$ are isomorphic, for $1\le i\le |V(G_2)|$, $H_{1_i}$ is the maximal clique in $G_1\Box G_2$. Hence, $\varkappa(G_1\Box G_2)=|V(H_{1_i})|=V(H_1)|$. Therefore, in general, $\varkappa(G_1\Box G_2) = {\text{max}}\{\varkappa(G_1),\varkappa(G_2)\}$.
\end{proof}

Next, let us recall the definition of corona of two graphs. The {\em corona} of two graphs $G_1$ and $G_2$, denoted by $G_1\odot G_2$, is the graph obtained taking one copy of $G_1$ (which has $p_1$ vertices) and $p_1$ copies of $G_2$ and then joining the $i$-th point of $G_1$ to every point in the $i$-th copy of $G_2$.

The following results establish the admissibility of strong IASI by the corona of two strong IASI graphs and its nourishing number.

\begin{theorem}
Let $G_1(V_1,E_1)$ and $G_2(V_2,E_2)$ be two strong IASI graphs. Then, their corona $G_1\odot G_2$ admits a strong IASI if and only if the difference set of the set-label of every vertex in $G_1$ is disjoint from the difference sets of the set- labels of all vertices of the corresponding copy of $G_2$. 
\end{theorem}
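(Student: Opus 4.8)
The plan is to mirror the decomposition strategy used for the join, treating the corona as a union of graphs on which Theorem~\ref{T-SIGU1} and Lemma~\ref{L-RDS} can be applied edge-class by edge-class. Writing $V(G_1)=\{u_1,u_2,\ldots,u_{p_1}\}$ and denoting by $G_{2_i}$ the $i$-th copy of $G_2$ (the one whose vertices are all joined to $u_i$), I would first introduce the subgraph $G_3$ whose edge set consists precisely of the connecting edges $u_iv$ with $v\in V(G_{2_i})$, $1\le i\le p_1$. Then the corona decomposes as $G_1\odot G_2=G_1\cup G_{2_1}\cup\cdots\cup G_{2_{p_1}}\cup G_3$, and by repeated application of Theorem~\ref{T-SIGU1} the corona admits a strong IASI if and only if each of $G_1$, the copies $G_{2_i}$, and $G_3$ admits a strong IASI.

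The next step is to dispatch the three classes of summands. The graph $G_1$ is a strong IASI graph by hypothesis, so the edges internal to $G_1$ already carry disjoint difference sets on their end vertices. Each copy $G_{2_i}$ can be given a strong IASI by Remark~\ref{R-SICoG}: taking $f_i(v)=r\cdot f(v)$ for a suitable positive integer $r$ scales the difference sets to $r\cdot D_{f(v)}$ while preserving their pairwise disjointness, so every copy is again a strong IASI graph. Consequently the only genuine constraint is the strong IASI condition on the connecting edges forming $G_3$.

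For the equivalence, I would then invoke Lemma~\ref{L-RDS} on the connecting edges. For the forward direction, assuming $G_1\odot G_2$ admits a strong IASI $F$, Theorem~\ref{TSS} gives that $G_3$ inherits a strong IASI, so $|g_F(u_iv)|=|F(u_i)|\cdot|F(v)|$ for every $v\in V(G_{2_i})$; Lemma~\ref{L-RDS} converts this into $D_{u_i}<D_v$, which is exactly the asserted disjointness of the difference set of each vertex of $G_1$ from those of the vertices of its associated copy of $G_2$. For the converse, assuming this disjointness, Lemma~\ref{L-RDS} shows that every connecting edge of $G_3$ satisfies the product condition, so $G_3$ is a strong IASI graph; combining this with the strong IASIs on $G_1$ and on the $G_{2_i}$ via Theorem~\ref{T-SIGU1} yields a strong IASI on the whole corona.

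The main obstacle I anticipate is ensuring that the three labelings are mutually compatible rather than merely individually valid. Since the labels on $G_1$ and on the copies $G_{2_i}$ are fixed (up to the scaling freedom of Remark~\ref{R-SICoG}), the disjointness $D_{u_i}<D_v$ for the connecting edges is not automatic but a genuine hypothesis, and care is needed to confirm that scaling the copies of $G_2$ does not secretly force an overlap with $D_{u_i}$; one also has to check that the assembled $F$ remains an injective set-indexer with injective induced edge map, so that $F$ is a bona fide IASI and not merely cardinality-correct. This bookkeeping at the interface between $G_1$ and its attached copies of $G_2$ is where the argument must be most careful.
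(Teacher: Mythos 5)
Your proposal is correct and follows essentially the same route as the paper: the paper likewise labels the copies of $G_2$ via Remark \ref{R-SICoG}, deduces the disjointness of the difference sets on the connecting edges in the forward direction, and uses that disjointness (via the relation $D_u<D_v$, i.e.\ Lemma \ref{L-RDS}) for the converse. Your version is somewhat more explicit in isolating the connecting edges as a subgraph $G_3$ and invoking Theorem \ref{T-SIGU1} and Theorem \ref{TSS} by name, but the substance is the same.
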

\begin{proof}
Let $f$ and $g$ be the strong IASI on $G_1$ and $G_2$ respectively. Let $g_i=r.g, 1\le i\le |V(G_1)|$, $r$ being a positive integer,  be an IASI defined on the $i$-th copy $G_i$ of $G$. Then, by Remark \ref{R-SICoG}, $g_i$ is a strong IASI on $G_i$ for all $i=1,2,3,\ldots, |V(G_1)|$. 

First assume that $G_1\odot G_2$ admits a strong IASI. Also, each copy of $G_2$ admits strong IASIs. Since $G_1\odot G_2$ admits a strong IASI, the difference set of the set-label of each vertex $u_i$ of $G_1$ and the difference set of the set-label of each vertex $v_{j_i}$ of $G_{2_i}$, where $1\le i\le |V(G_1), 1\le j\le |V(G_2)$, must be disjoint. 

Conversely, assume that the difference set of the set-label of every vertex in $G_1$ is disjoint from the difference sets of the set- labels of all vertices of the corresponding copy of $G_2$. Since each copy of $G_2$ is also strong IASI graph, for every pair of adjacent vertices $u,v$ in $G_1\odot G_2$, the difference sets $D_u$ and $D_v$ hold the relation $D_u<D_v$. Hence, $G_1\odot G_2$ admits a strong IASI. 
\end{proof}

\begin{proposition}
If $G_1$ and $G_2$ are two strong IASI graphs, then
\[ \varkappa(G_1\odot G_2) = \left\{
  \begin{array}{l l}
    \varkappa(G_1) & \quad \text{if $\varkappa(G_1)> \varkappa(G_2)$}\\
    \varkappa(G_2)+1 & \quad \text{if $\varkappa(G_2)> \varkappa(G_1)$}
  \end{array} \right.\]
\end{proposition}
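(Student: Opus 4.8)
The plan is to reduce the entire statement to a computation of the clique number, exploiting the theorem already established that for a strong IASI graph $G$ one has $\varkappa(G)=\omega(G)$, the size of a maximal clique. Thus, assuming $G_1\odot G_2$ admits a strong IASI (so that $\varkappa(G_1\odot G_2)$ is defined), it suffices to show that $\omega(G_1\odot G_2)=\max\{\omega(G_1),\omega(G_2)+1\}$ and then to split this into the two stated cases according to whether $\varkappa(G_1)>\varkappa(G_2)$ or $\varkappa(G_2)>\varkappa(G_1)$. To set up, I would write $V(G_1)=\{u_1,u_2,\ldots,u_{p_1}\}$ and denote by $G_{2_i}$ the $i$-th copy of $G_2$, which by the construction of the corona is joined completely to the single vertex $u_i$.

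The heart of the argument is a structural description of the cliques of $G_1\odot G_2$. First I would record the adjacency rules of the corona: two vertices lying in distinct copies $G_{2_i}$ and $G_{2_j}$ ($i\neq j$) are never adjacent, and a vertex of $G_{2_i}$ is adjacent, among the vertices of $G_1$, only to $u_i$. From this I would classify every clique $C$ of $G_1\odot G_2$ into two types. If $C$ meets no copy of $G_2$, then $C\subseteq V(G_1)$ and so $|C|\le\omega(G_1)$. If instead $C$ contains some vertex $v\in G_{2_i}$, then $C$ cannot meet any other copy $G_{2_j}$ (non-adjacency across copies), and the only vertex of $G_1$ that can belong to $C$ is $u_i$; hence $C\subseteq V(G_{2_i})\cup\{u_i\}$ and $|C|\le\omega(G_2)+1$. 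Conversely, both bounds are attained: a maximal clique of $G_1$ gives size $\omega(G_1)$, while a maximal clique of $G_{2_i}$ together with the universally adjacent vertex $u_i$ gives a clique of size $\omega(G_2)+1$. Therefore
\[
\omega(G_1\odot G_2)=\max\{\omega(G_1),\,\omega(G_2)+1\}.
\]

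It then remains to translate this into the two cases. Since nourishing numbers are positive integers, $\varkappa(G_1)>\varkappa(G_2)$ means $\varkappa(G_1)\ge\varkappa(G_2)+1$, so the maximum equals $\varkappa(G_1)$; and $\varkappa(G_2)>\varkappa(G_1)$ forces $\varkappa(G_2)+1>\varkappa(G_1)$, so the maximum equals $\varkappa(G_2)+1$. The step I expect to be the main obstacle — and the one I would argue most carefully — is the second part of the classification, namely proving that a clique containing a vertex of some copy $G_{2_i}$ can absorb \emph{at most one} vertex of $G_1$ and can never straddle two copies; everything else is routine once the adjacency structure of the corona is pinned down. It is worth noting that the boundary case $\varkappa(G_1)=\varkappa(G_2)$, for which the formula gives $\varkappa(G_2)+1$, is not covered by the stated two cases.
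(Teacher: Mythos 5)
Your proposal is correct and follows essentially the same route as the paper: both arguments reduce $\varkappa(G_1\odot G_2)$ to the clique number of the corona and identify the cliques $H_{2_i}\cup\{u_i\}$ of size $\omega(G_2)+1$ before splitting into the two cases. Your version is in fact more complete, since you also prove the upper bound (a clique cannot straddle two copies of $G_2$ or contain more than one vertex of $G_1$ once it meets a copy), a step the paper asserts only implicitly; your observation that the case $\varkappa(G_1)=\varkappa(G_2)$ is omitted from the statement is also accurate.
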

\begin{proof}
Let $H_1$ and $H_2$ be the maximal cliques in $G_1$ and $G_2$ respectively. Then, $H_{2_i}$ is the copy of $H_2$ in $G_{2_i}$, which is maximal in $G_{2_i}$. Since the vertex $u_i$ of $H_1$ is adjacent to all vertices of the copy $H_{2_i}$ in $G_1\odot G_2$, we can find $|V(G_1)|$ cliques in $G_1\odot G_2$ with clique number $1+|V(H_{2_i}|=1+|V(H_2)|=1+\varkappa(G_2)$.

If $\varkappa(G_1)> \varkappa(G_2)$, then clearly, $\varkappa(G_1\odot G_2)=\varkappa(G_1)$. If  $\varkappa(G_1)< \varkappa(G_2)$, then the maximal clique in $G_1\odot G_2$ is $H_{2_i}+\{u_i\}$. Therefore, $\varkappa(G_1\odot G_2)=1+\varkappa(G_2)$. This completes the proof.
\end{proof}

\section{Conclusion}
In this paper, we have discussed about the admissibility of strong IASIs by certain graph classes, graph operations and graph products. We have established some results on the nourishing number of the graphs and graph operations. The admissibility of strong IASI by various other graph operations and graph products and finding the corresponding nourishing numbers are remained to be estimated.

More properties and characteristics of strong IASIs, both uniform and non-uniform, are yet to be investigated. The problems of establishing the necessary and sufficient conditions for various graphs and graph classes to have certain IASIs still remain unsettled. All these facts highlight a great scope for further studies in this area.

\end{document}